\newtheorem{Theorem}[equation]{Theorem}
\newtheorem{Corollary}[equation]{Corollary}
\newtheorem{Lemma}[equation]{Lemma}
\newtheorem{Proposition}[equation]{Proposition}
\theoremstyle{definition}
\newtheorem{Definition}[equation]{Definition}
\newtheorem{Example}[equation]{Example}
\newtheorem{Remark}[equation]{Remark}
\numberwithin{equation}{section}
\numberwithin{figure}{section}
\newcommand{\PP}{{\mathbb P}}
\newcommand{\C}{{\mathbb C}}
\newcommand{\Z}{{\mathbb Z}}
\newcommand{\R}{{\mathbb R}}
\newcommand{\mc}[1]{\mathcal{#1}}
\begin{document}
	
\title{From Schubert Varieties to Doubly-Spherical Varieties}

\author[1]{Mahir Bilen Can}
\author[2]{S. Senthamarai Kannan}
\author[3]{Pinakinath Saha}

\affil[1]{\small{Tulane University, New Orleans, USA, mahirbilencan@gmail.com}}
\affil[2]{\small{Chennai Mathematical Insitute, Chennai, India, kannan@cmi.ac.in}}
\affil[3]{\small{Indian Institute of Science, Bangaluru, India, pinakinaths@iisc.ac.in}}

\maketitle

\begin{abstract}
Horospherical Schubert varieties are determined. 
It is shown that the stabilizer of an arbitrary point in a Schubert variety is a strongly solvable algebraic group.
The connectedness of this stabilizer subgroup is discussed.
Moreover, a new family of spherical varieties, called doubly spherical varieties, is introduced.
It is shown that every nearly toric Schubert variety is doubly spherical.
\vspace{.5cm}

\noindent 
\textbf{Keywords: Schubert varieties, nearly toric Schubert varieties, horospherical varieties, strongly solvable groups,
doubly spherical varieties} 
\medskip
		
\noindent 
\textbf{MSC: 14M27, 14M25, 14M17, 14M15, 05E14}

\end{abstract}

\section{Introduction}

In this article, we discuss the interplay between Schubert varieties and spherical varieties. 
Our primary focus is on understanding the orbits of maximal reductive subgroups in Schubert varieties by leveraging tools from spherical geometry. 
Towards this end, we examine the stabilizer subgroups of points in general positions within Schubert varieties.
This analysis led us to consider a new family of spherical varieties and provide characterizations for several other significant families of spherical homogeneous varieties within the context of Schubert varieties. These include horospherical Schubert varieties and strongly solvable spherical varieties, both of which play pivotal roles in the development of algebraic group actions. 
We proceed with a brief motivation for investigating horospherical varieties, followed by a detailed presentation of our findings concerning them in the context of Schubert varieties.

Let $G$ denote a connected reductive algebraic group defined over an algebraically closed field $k$ of arbitrary characteristic unless stated otherwise. 
A closed subgroup $H\subseteq G$ is called a {\em horospherical subgroup} if it contains a maximal unipotent subgroup $U$ of $G$. 
We fix a Borel subgroup $B\subset G$ and a maximal torus $T\subset B$ such that $B=TU$.
If $H$ contains $U$, then $B^- H$ is open in $G$, where $B^-$ stands for the unique Borel subgroup opposite to $B$.
This means that the homogeneous space $G/H$ is a {\em horospherical homogeneous space}. 
More generally, a normal $G$-variety $X$ is said to be {\em horospherical} if the stabilizer of a point in general position $x\in X$ is a horospherical subgroup of $G$.
In this case, any Borel subgroup of $G$ has an open orbit in $X$, implying that $X$ is a {\em spherical $G$-variety}. 
Horospherical varieties play a crucial role not only in understanding spherical varieties but also in studying general actions of $G$.
For instance, a key result by Knop~\cite[Satz 2.7]{Knop1990} asserts that for any $G$-variety $X$, there exists a one-parameter family of nonsingular $G$-varieties $X_t$ ($t$ belonging to the affine line). 
For non-zero values of $t$, $X_t$ is $G$-equivariantly isomorphic to an open, nonsingular, $G$-stable subset of $X$. 
Moreover, the fiber at $t=0$ is a product of the form $V\times X_0$, where $X_0$ is a nonsingular horospherical $G$-variety, and $G$-action on $V$ is trivial. 
It is also worth noting that horospherical varieties have applications in representation theory. 
In fact, an important characterizing property of quasi-affine horospherical varieties is the grading on their coordinate rings, induced by the monoid of dominant weights which satisfies the property
$$
\C[X]_{(\lambda)} \C[X]_{(\mu)} \subseteq \C[X]_{(\lambda+\mu)}.
$$
Here, $\C[X]_{(\lambda)}$ (resp. $\C[X]_{(\mu)}$) is the isotypic component of type $\lambda$ (resp. of $\mu$) of $\C[X]$. 
\medskip

Let $\mc{R}$ denote the root system of the pair $(G,T)$, and let $\mc{S}$ denote its subset consisting of the simple roots determined by $(G,B,T)$.
Let $W$ denote the Weyl group of $G$ with respect to $T$. 
For $w\in W$, let $\dot{w}$ denote a representative of $w$ in $N_{G}(T)$, the normalizer of $T$ in $G$.
For $w\in W,$ let $X_{wB}$ denote the corresponding Schubert variety, $X_{wB}:=\overline{B\dot{w}B/B} \subseteq G/B$.
Since the left $B$-action on $G/B$ stabilizes $X_{wB}$, the stabilizer group of $X_{wB}$ in $G$ is given by the parabolic subgroup
determined by the set of simple roots $\mc{I}_w := \{ \alpha \in \mc{S} \mid w^{-1}(\alpha) < 0 \}$. 
For $J\subseteq \mc{I}_{w}$, $P_{J}$ denotes the parabolic subgroup of $G$ corresponding to $J$, and $L_J$ denotes its Levi subgroup containing $T$.
This gives rise to a finite family of reductive groups parametrized by the subsets of $\mc{I}_{w}$ each of which acts on $X_{wB}$.

The starting point of our paper is the following question:
\begin{quote}
For which $w \in W$ is the Schubert variety $X_{wB}$ a horospherical $L_{J}$-variety?
\end{quote}
Recently, it was shown in~\cite{CanSaha2023, GaoHodgesYong} that $X_{wB}$ is a spherical $L_{J}$-variety if and only if $w$ can be expressed as $w = w_{0,J}c$, where $\ell(w) = \ell(w_{0,J}) + \ell(c)$. 
Here, $w_{0,J}$ denotes the longest element of the Weyl group of $L_{J}$, and $c$ is a {\em Coxeter-type element} of $W$, meaning a product of distinct simple reflections from a subset of $S$ in a specified order. 
The set $S$ corresponds to the simple reflections determined by $\mathcal{S}$.
Leveraging this characterization of spherical Schubert varieties, we now turn to the combinatorial criteria for identifying when a Schubert variety is horospherical. 

\begin{Theorem}\label{intro:T1}
The Schubert variety $X_{wB} \subseteq G/B$ is a horospherical $L_{J}$-variety if and only if $w$ can be written in the form $w= w_{0,J}c$, where $\ell(w) = \ell(w_{0,J}) + \ell(c)$, and the supports of $w_{0,J}$ and $c$ are disjoint subsets of $S$. 
(The support of an element $v\in W$ is the set of simple reflections from $S$ that appear in a reduced expression of $v$.)
\end{Theorem}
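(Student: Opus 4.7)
Since every horospherical variety is spherical, by the characterization of $L_J$-spherical Schubert varieties recalled above I may assume $w = w_{0,J}c$ with $\ell(w) = \ell(w_{0,J}) + \ell(c)$ and $c$ Coxeter-type. The plan is to locate a point in the open $L_J$-orbit of $X_{wB}$, study its $L_J$-stabilizer $H$, and decide when $H$ contains a maximal unipotent of $L_J$; I expect the answer to be the disjoint-support condition on $w_{0,J}$ and $c$.

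The obvious candidate $\dot{w}B/B$ is inadequate: since $\dot{w_{0,J}}\in L_J$, the orbit $L_J\cdot \dot{w}B/B = L_J\cdot \dot{c}B/B$ has dimension $\ell(w_{0,J})$ rather than the required $\ell(w)$. I will therefore take $x_0 = u_0\dot{w}B/B$ for $u_0 \in U_{P_J}$ generic, and verify that $L_J\cdot x_0$ is the open orbit by a dimension count (using $P_J = L_J\ltimes U_{P_J}$ and the fact that $P_J\cdot\dot{w}B/B$ is open in $X_{wB}$). The stabilizer $H = L_J\cap (u_0\dot{w})B(u_0\dot{w})^{-1}$ is then a solvable subgroup of $L_J$ of dimension $\dim L_J - \ell(w) = \mathrm{rank}(T) + \ell(w_{0,J}) - \ell(c)$, and my goal is to determine when the identity component of $H$ contains a subgroup conjugate in $L_J$ to a maximal unipotent $U_{L_J}$.

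For the forward direction, suppose $\mathrm{supp}(c)\cap J = \emptyset$. The reflections in a reduced expression of $c$ lie outside $J$, so their action preserves positivity on $R^+_{L_J}$, and I expect to show, by a direct root-subgroup computation or by an inductive argument that peels off simple reflections of $c$, that $H$ acquires a one-parameter unipotent subgroup in every root direction of $L_J$ coming from the $w_{0,J}$-factor; these directions assemble into a maximal unipotent of $L_J$ lying in $H$, so $X_{wB}$ is horospherical. For the converse, if $\mathrm{supp}(c)\cap J \neq \emptyset$, then a simple reflection $s_i$ with $i \in J$ occurs in both $w_{0,J}$ and $c$, introducing a twist that breaks the above splitting: a one-parameter family in $H$ that would be unipotent in the disjoint case becomes semisimple (its Jordan decomposition acquires a nonzero semisimple part), and consequently no maximal unipotent of $L_J$ can lie in $H$.

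The principal technical obstacle is making this structural dichotomy between unipotent and semisimple components of $H$ uniform across all cases: the two situations can look superficially identical at the level of dimensions, and only a careful examination of Jordan decompositions, equivalent to careful tracking of the conjugation of root subgroups $U_\beta \subset L_J$ by $u_0$ and by $\dot{w}$, reveals the correct criterion. Small-rank examples in type $A$, such as $G = SL_4$ with $J = \{s_2\}$ and $w = s_2 s_1 s_2 s_3$ (overlap: the generic $L_J$-stabilizer is a torus) versus $w = s_2 s_1$ (disjoint: the generic $L_J$-stabilizer contains a maximal unipotent of $L_J$), illustrate the phenomenon and serve as a template for the general argument.
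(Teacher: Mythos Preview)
Your forward direction is workable and close to the paper's, though you make the generic point unnecessarily opaque. The paper exploits the fact that $\dot{w}_{0,J}\in L_J$ to pass from $\dot{w}B/B$ to $\dot{c}B/B$ and then takes the \emph{explicit} point $\xi=u_{\beta_{i_1}}(1)\cdots u_{\beta_{i_r}}(1)\dot{c}B/B$ in the big cell of $X_{cB}$, where $\beta_{i_j}=s_{i_1}\cdots s_{i_{j-1}}(\alpha_{i_j})$. When $\mathrm{supp}(c)\cap J=\emptyset$, every root involved in $u^{-1}U_\alpha(t)u$ (for $\alpha\in J$) still has a positive $J$-component after applying $c^{-1}$, so $U_{L_J}$ fixes $\xi$; a dimension count then shows $\xi$ lies in the open $L_J$-orbit. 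This is exactly your ``root subgroups assemble into a maximal unipotent,'' but with no genericity hypothesis to manage and nothing left to ``expect.''

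The converse is where your proposal has a real gap. You plan to show, for your chosen $x_0$, that when supports overlap the stabilizer $H$ contains no maximal unipotent of $L_J$, via a ``Jordan decomposition dichotomy.'' But you never say which one-parameter family you are tracking, why its becoming semisimple precludes \emph{any} conjugate of $U_{L_J}$ from lying in $H$, or how to make this uniform; you even flag this as ``the principal technical obstacle.'' Since $\dim H=\mathrm{rank}(T)+\ell(w_{0,J})-\ell(c)\ge \ell(w_{0,J})=\dim U_{L_J}$, dimensions never separate the two cases, so the burden really is on a structural argument you have not supplied.

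The paper sidesteps this entirely. Assuming $X_{wB}$ is horospherical, it picks a point $x$ in general position with $U_{L_J}\subset\mathrm{stab}(x)$ and first pins down \emph{where} $x$ sits: writing $x=u\dot{v}B/B$ with $v=v_1c_1$, $v_1\le w_{0,J}$, $c_1\le c$, one sees that if $v_1\neq id$ then some $U_\alpha(1)$ with $\alpha\in\mc{S}\cap\mc{R}^+(v_1^{-1})$ moves $x$, contradicting $U_{L_J}\subset\mathrm{stab}(x)$; hence $v_1=id$, and a dimension comparison forces $c_1=c$, so $x$ lies in the open $T$-orbit of $X_{cB}$. Now comes the key trick you are missing: $T$ normalizes $U_{L_J}$, so $U_{L_J}$ fixes the entire $T$-orbit $T\cdot x$, which is dense in $X_{cB}$. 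Thus $U_{L_J}$ acts trivially on $X_{cB}$, hence on each $X_{s_{i_j}B}\cong\PP^1$, which immediately forces $s_{i_j}\notin\{s_\alpha:\alpha\in J\}$ for all $j$, i.e.\ $\mathrm{supp}(c)\cap\mathrm{supp}(w_{0,J})=\emptyset$. No stabilizer computation or Jordan decomposition is needed; the normalization argument converts a pointwise hypothesis into a global one and reduces the question to the action on projective lines.
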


After Theorem~\ref{intro:T1}, it becomes a natural task to investigate the isotropy groups for the action of $L_J$ on Schubert varieties. 
To this end, we examined the relationship between Schubert varieties and the subgroups of the Borel subgroup of the Levi subgroups.

A subgroup $H \subseteq G$ is said to be {\em strongly solvable} if it is contained in a Borel subgroup of $G$. 
These strongly solvable groups are of intrinsic interest and have significant applications in the classification of spherical subgroups. 
This was first recognized by Luna in his work~\cite{Luna1993}, where he classified strongly solvable spherical subgroups. 
Subsequently, Avdeev provided an alternative and more detailed approach to classifying these subgroups in two separate publications~\cite{Avdeev2011,Avdeev2015}.

We establish a connection between the theory of strongly solvable spherical subgroups and our theory of spherical Schubert varieties. 
In fact, we show that our result follows from a more general statement: the stabilizer of any point $x \in X_{wB}$ in $L_{J}$ is strongly solvable. 
We state this as a second main result of our paper.

\begin{Theorem}\label{intro:T2}
Let $w\in W$ and $J\subseteq \mc{I}_w$.
Let $X_{wB}$ denote the corresponding Schubert variety. 
Then the stabilizer in $L_{J}$ of every point in $X_{wB}$ is a strongly solvable subgroup of $L_{J}$.
\end{Theorem}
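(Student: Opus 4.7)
The plan is to use a central one-parameter subgroup of $L_J$ to contract $x$ to a point of $X_{wB}$ whose $L_J$-stabilizer is manifestly a Borel of $L_J$, and then to transport this containment back to $(L_J)_x$ itself by closedness of the incidence relation.

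First, by Bruhat decomposition of $X_{wB}$, I write $x = u\dot v B/B$ with $v\leq w$ and $u\in U$, and factor $u = u_J u^J$ along $U = U_J\cdot U^J$, where $U_J := U\cap L_J$ and $U^J$ is the unipotent radical of $P_J$. Setting $B_v := \dot v B\dot v^{-1}$, one has $(L_J)_x = L_J\cap uB_v u^{-1}$, and the key structural fact I will exploit is that $L_J\cap B_v$ is a Borel of $L_J$ --- a standard consequence of $B_v$ containing the maximal torus $T$ of $L_J$. I then choose a one-parameter subgroup $\lambda\colon\mathbb{G}_m\to T$ with $\langle\lambda,\alpha\rangle = 0$ for $\alpha\in J$ and $\langle\lambda,\alpha\rangle > 0$ for $\alpha\in\mc{S}\setminus J$ (a positive integer combination of the fundamental coweights dual to $\mc{S}\setminus J$ will do); the first condition puts $\lambda(s)$ in the center of $L_J$, so $\lambda(s)$ centralizes $L_J$. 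A direct calculation gives $\lambda(s)\cdot x = \bigl(\lambda(s)u\lambda(s)^{-1}\bigr)\dot v B/B$, and as $s\to 0$ we have $\lambda(s)u^J\lambda(s)^{-1}\to e$ (all root exponents in $u^J$ are strictly positive) while $\lambda(s)u_J\lambda(s)^{-1} = u_J$ by centrality, so $\lambda(s)\cdot x \to u_J\dot v B/B$, a limit that lies in $X_{wB}$ by properness and the $P_J$-stability of $X_{wB}$ (where the hypothesis $J\subseteq\mc I_w$ is used).

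Because $\lambda(s)$ centralizes $L_J$, we have $(L_J)_{\lambda(s)\cdot x} = (L_J)_x$ for every $s \in \mathbb{G}_m$. Closedness of the incidence relation $\{(l,y)\in L_J\times X_{wB} : l\cdot y = y\}$ then forces
\[
(L_J)_x \;\subseteq\; (L_J)_{u_J\dot v B/B} \;=\; u_J(L_J\cap B_v)u_J^{-1},
\]
which is a Borel of $L_J$ as a conjugate of $L_J\cap B_v$ by $u_J\in L_J$. Hence $(L_J)_x$ is strongly solvable. The main conceptual point, I expect, is choosing the right central cocharacter $\lambda$: it must simultaneously (i) contract $u^J$ to the identity so that the limit is explicitly computable, and (ii) centralize $L_J$ so that $(L_J)_x$ itself is preserved under the flow, allowing upper-semicontinuity to yield a genuine containment inside a single Borel of $L_J$ rather than merely a dimension inequality.
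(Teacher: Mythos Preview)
Your proof is correct and takes a genuinely different route from the paper's.

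The paper's argument is two lines of structure theory: with $Q := \mathrm{stab}_G(x)$ (a Borel of $G$) and $P$ the parabolic stabilizing $X_{wB}$, one has $H_x = L_J \cap Q \subseteq \bigl((P\cap Q)\cdot\mc{R}_u(P)\bigr)\cap L_J$, and the right-hand side is declared a Borel of $L_J$ by an appeal to Borel's textbook (Proposition~21.13(i)). No limits, no cocharacters --- just the standard fact about how a Borel of $G$ meets a Levi factor after passing through the unipotent radical of the ambient parabolic.

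Your approach is dynamical rather than structural: you flow $x$ along a one-parameter subgroup $\lambda$ central in $L_J$ to the limit $x_0 = u_J\dot vB/B$, whose $L_J$-stabilizer is visibly the Borel $u_J(L_J\cap B_v)u_J^{-1}$ (using that $B_v \supseteq T$), and then invoke closedness of the incidence relation together with the fact that the flow preserves $(L_J)_x$. This is more explicit --- it names the Borel containing $H_x$ --- and in fact proves the stronger statement that $(L_J)_x$ is strongly solvable for \emph{every} $x \in G/B$ and every standard Levi $L_J$, with no reference to a Schubert variety. Your parenthetical locating the hypothesis $J\subseteq\mc{I}_w$ at the step ``$x_0\in X_{wB}$'' is accurate, but that step is inessential: the limit already exists in $G/B$, and nothing downstream requires it to land in $X_{wB}$. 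The paper's route is shorter by outsourcing to a black-box lemma; yours is self-contained and yields a little more.
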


This theorem easily adapts to the spherical Schubert varieties.

In the rest of the paper, by ``a point in general position'' we mean the following. Let $X$ be an irreducible $K$-variety, where $K$ is an algebraic group. A point $x\in X$ is said to be a {\em point in general position} if the dimension of the $K$-orbit of $x$ is maximal.

In particular, as a corollary we show in Proposition~\ref{P:ssss} that the stabilizer of a point in general position of a {spherical} Schubert variety is a strongly solvable spherical subgroup of the appropriate Levi subgroup.

We now proceed to discuss an entirely new family of spherical varieties.
\begin{Definition}
Let $X$ be a spherical $G$-variety. 
If every $B$-orbit closure in $X$ is a spherical $L$-variety for some Levi subgroup $L\subseteq G$, then we call $G$ a {\em doubly-spherical $G$-variety}.
\end{Definition}

There are various intriguing representation theoretic properties of doubly-spherical varieties. 
We mention a characterization of the affine doubly-spherical $G$-varieties. 

\begin{Proposition}
Let $X$ be a spherical affine $G$-variety. 
Let $A$ denote the coordinate ring of $X$. 
Then $X$ is a doubly-spherical affine $G$-variety if and only if every $B$-stable prime ideal of $A$ is a multiplicity-free $L$-module for some Levi subgroup $L\subseteq G$. 
\end{Proposition}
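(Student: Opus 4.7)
The argument rests on two standard inputs: first, that on a spherical $G$-variety the Borel subgroup $B$ acts with only finitely many orbits (Brion-Vinberg); and second, the Vinberg-Kimelfeld theorem, which states that an affine $L$-variety $Y$ is spherical if and only if its coordinate ring is multiplicity-free as an $L$-module. The plan is to use the first to convert $B$-stable prime ideals of $A$ into $B$-orbit closures in $X$, and then to use the second to translate sphericity of those closures into multiplicity-freeness of the corresponding quotients $A/P$ (which is the sense in which I read the statement of the proposition).

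The dictionary is set up as follows. Since $B$ has finitely many orbits on $X$, every $B$-stable irreducible closed subvariety $Y\subseteq X$ contains a unique dense $B$-orbit, so $Y$ is the closure of a $B$-orbit. Combined with the standard bijection between radical ideals of $A$ and closed subvarieties of $X$, this yields a bijection between $B$-orbit closures $Y\subseteq X$ and $B$-stable prime ideals $P=I(Y)\subseteq A$. Under this bijection, a Levi subgroup $L\subseteq G$ stabilizes $Y$ exactly when it stabilizes $P$, in which case $A/P$ is the coordinate ring of $Y$, viewed as an $L$-module.

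The proposition is now a direct translation in both directions. For the forward direction, let $P$ be a $B$-stable prime of $A$ and set $Y=V(P)$; by the dictionary, $Y$ is a $B$-orbit closure. The doubly-spherical hypothesis supplies a Levi $L$ stabilizing $Y$ for which $Y$ is spherical as an $L$-variety, and Vinberg-Kimelfeld applied to the affine $L$-variety $Y$ forces $A/P$ to be multiplicity-free as an $L$-module. For the converse, given any $B$-orbit closure $Y\subseteq X$, applying the hypothesis to $P=I(Y)$ produces a Levi $L$ stabilizing $P$ with $A/P$ multiplicity-free as $L$-module, whence Vinberg-Kimelfeld yields that $Y$ is spherical as an $L$-variety.

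The only nontrivial input is the finiteness of $B$-orbits on a spherical variety, which I would simply cite from the literature. Beyond that the proof is a clean geometry-algebra dictionary, and I do not anticipate any serious obstacle.
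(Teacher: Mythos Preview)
Your proposal is correct and follows exactly the approach the paper indicates: the paper does not give a detailed proof but simply remarks that the result ``is a rather direct consequence of a theorem of Kimel'feld and Vinberg which characterizes affine spherical $G$-varieties according to their coordinate ring being a multiplicity-free $G$-module or not.'' Your argument is a faithful elaboration of this, and your reading of the statement (that it is $A/P$, not $P$ itself, which should be multiplicity-free) is the one that makes the Kimel'feld--Vinberg criterion apply.
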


This result is a rather direct consequence of a theorem of Kimel'feld and Vinberg~\cite{KimelfeldVinberg} which characterizes affine spherical $G$-varieties according to their coordinate ring being a multiplicity-free $G$-module or not. 
In this manuscript we are interested in answering the following question:
\begin{center}
Which Schubert varieties are doubly-spherical?
\end{center}
We answer this question for Schubert varieties that belongs to another interesting family of spherical varieties.

\begin{Definition}
Let $X$ be a spherical $G$-variety. 
Let $T$ be a maximal torus of $G$. 
If the minimum codimension of $T$-orbit in $X$ is 1, then we call $X$ a {\em nearly toric $G$-variety}. 
\end{Definition}

Nearly toric $G$-varieties were originally considered in~\cite{CanDiaz2024}, 
there the authors classified the nearly toric Schubert varieties of type A. 
In the present article, we obtain two theorems on the nearly toric Schubert varieties.
The first of these two results gives a combinatorial characterization.

\begin{Theorem}\label{intro:T3}
Let $w\in W$. 
Then $X_{wB}$ is a nearly toric Schubert variety if and only if 
$w$ is of the form $w= s_\alpha c$, where $s_\alpha$ is a simple reflection, and $c$ is a Coxeter type element such that $s_\alpha \in {\rm supp}(c)$ and $\ell(w) = \ell(c)+1$. 
\end{Theorem}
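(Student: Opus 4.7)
My plan is first to interpret the nearly toric condition in terms of the inversion set of $w$, and then to match the resulting numerical criterion with the claimed factorisation $w = s_\alpha c$.

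The Schubert cell $C_w := B\dot{w}B/B$ is an open dense $T$-stable subvariety of $X_{wB}$, so the maximum dimension of a $T$-orbit in $X_{wB}$ is attained inside $C_w$. Under the standard identification $C_w \cong \prod_{\beta \in I(w)} U_\beta$, where $I(w) = \{\beta \in \mc{R}^+ : w^{-1}\beta < 0\}$, the torus $T$ acts diagonally with weight $\beta$ on the factor $U_\beta$. A generic $T$-orbit therefore has dimension equal to the rank of the $\mathbb{Z}$-lattice generated by $I(w)$, that is, $\dim_{\mathbb{R}} \mathrm{span}(I(w))$. A short induction on $\ell(w)$ based on the decomposition $I(w) = \{\alpha\} \sqcup s_\alpha(I(u))$ for a reduced factorisation $w = s_\alpha u$, split according to whether $s_\alpha \in {\rm supp}(u)$, yields $\mathrm{span}(I(w)) = \mathrm{span}({\rm supp}(w))$. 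Hence $X_{wB}$ is nearly toric if and only if $\ell(w) = |{\rm supp}(w)|+1$.

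The ``$\Leftarrow$'' direction of the theorem is immediate from this criterion: if $w = s_\alpha c$ with $c$ Coxeter-type, $s_\alpha \in {\rm supp}(c)$, and $\ell(w) = \ell(c)+1$, then the concatenation of a reduced word for $s_\alpha$ with one for $c$ is a reduced expression of $w$ whose set of letters equals ${\rm supp}(c)$, so $|{\rm supp}(w)| = |{\rm supp}(c)| = \ell(c) = \ell(w) - 1$.

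The ``$\Rightarrow$'' direction is the combinatorial heart of the argument. It suffices to show that whenever $\ell(w) = |{\rm supp}(w)|+1$, the element $w$ admits a left descent $s$ with ${\rm supp}(sw) = {\rm supp}(w)$; for then $sw$ has length equal to the size of its full support and is therefore Coxeter-type, and we may take $c = sw$. Reducing first to ${\rm supp}(w) = S$ by passing to the parabolic subgroup $W_{{\rm supp}(w)}$, and then to $W$ irreducible via the product decomposition of $w$ along the connected components of the Dynkin diagram on ${\rm supp}(w)$, I would argue by induction on $|S|$. For any $s$ in the left descent set of $w$, either ${\rm supp}(sw) = S$ (done), or ${\rm supp}(sw) = S \setminus \{s\}$, in which case the inductive hypothesis applied to $sw$ gives $sw = tc'$ with $c'$ Coxeter-type on $S \setminus \{s\}$ and $t \in {\rm supp}(c')$, so that $w = stc'$. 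When $s$ and $t$ commute, $w = t(sc')$ with $sc'$ Coxeter-type on $S$, completing the step. The main obstacle is the remaining subcase where $s$ and $t$ do not commute: there one must exploit braid relations inside $w = stc'$ and the freedom to reorder $c'$ by commutation moves, in order to produce an alternative reduced expression of $w$ whose first letter is repeated; the non-simply-laced types, where higher-order braid relations appear, are likely to require the most care.
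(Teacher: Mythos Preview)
Your overall strategy is quite different from the paper's, and your computation that the generic $T$-orbit in $X_{wB}$ has dimension $|\mathrm{supp}(w)|$ is correct and pleasant (it is essentially the content of Lemma~\ref{lemma A}). However, there are two genuine gaps.

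\textbf{Sphericality is part of the definition.} By the paper's Definition, a nearly toric variety is in particular a \emph{spherical} $G$-variety. Your sentence ``Hence $X_{wB}$ is nearly toric if and only if $\ell(w)=|\mathrm{supp}(w)|+1$'' only accounts for the $T$-orbit codimension and silently drops the sphericality requirement. Consequently your ``$\Leftarrow$'' direction is \emph{not} immediate: from $w=s_\alpha c$ you still need to exhibit a Levi subgroup $L$ for which $X_{wB}$ is spherical. That is precisely the nontrivial input the paper imports from \cite{CanSaha2023} (taking $J=\{\alpha\}$, so that $w_{0,J}=s_\alpha$). Without that citation or an independent argument, your backward implication is incomplete.

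\textbf{The combinatorial reduction is unfinished.} For ``$\Rightarrow$'' you reduce to showing: if $\ell(w)=|\mathrm{supp}(w)|+1$ then $w$ has a left descent $s$ with $s\in\mathrm{supp}(sw)$. Your induction leaves open the case $w=stc'$ with $s,t$ non-commuting, and you acknowledge this. This case does not dissolve with a single braid move; one has to manufacture another left descent of $w$ (different from $s$) and argue that it works, which requires a further case analysis on how $s$ sits relative to $\mathrm{supp}(c')$. The paper avoids this entirely: since ``nearly toric'' already entails ``spherical $L_J$ for some $J\subseteq\mathcal{I}_w$'', the characterization of spherical Schubert varieties gives $w=w_{0,J}c$ with lengths additive and $c$ Coxeter-type for free; a one-line dimension comparison $\dim T+\dim U_J=\dim X_{wB}=\dim T\cdot x+1$ then forces $|J|=1$, hence $w_{0,J}=s_\alpha$ (Lemma~\ref{L:nearlytoric}). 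In other words, the paper trades your unresolved Coxeter-combinatorics for a single citation plus an equality of dimensions.

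If you want to salvage your route, you must (i) invoke the spherical classification (or prove directly that $B_{L_{\{\alpha\}}}$ has a dense orbit) for ``$\Leftarrow$'', and (ii) either finish the inductive combinatorial lemma in the non-commuting case, or bypass it by using sphericality on the ``$\Rightarrow$'' side as the paper does.
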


Our second main result shows that nearly spherical Schubert varieties are special instances of the doubly-spherical Schubert varieties.

\begin{Theorem}\label{intro:T4}
Let $X_{wB}$ be a nearly toric $L_{\mc{I}_w}$-variety. 
Then $X_{wB}$ is a doubly-spherical $L_{\mc{I}_w}$-variety. 
\end{Theorem}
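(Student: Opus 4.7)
The plan is to combine the normal form provided by Theorem~\ref{intro:T3} with the spherical characterization of \cite{CanSaha2023, GaoHodgesYong}. Since the $B$-orbit closures inside $X_{wB}$ are exactly the Schubert subvarieties $X_{vB}$ for $v \leq w$, the task reduces to exhibiting, for each such $v$, a Levi subgroup $L \subseteq L_{\mc{I}_w}$ under which $X_{vB}$ is spherical.

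By Theorem~\ref{intro:T3}, I would first fix a normal form $w = s_\alpha c$ with $c$ Coxeter-type, $s_\alpha \in {\rm supp}(c)$, and $\ell(w) = \ell(c) + 1$. I would then choose any reduced expression $c = s_{\beta_1} \cdots s_{\beta_k}$; the simple reflections $s_{\beta_i}$ are distinct and one of them is $s_\alpha$, so $s_\alpha s_{\beta_1} \cdots s_{\beta_k}$ is a reduced word for $w$. By the subword property of Bruhat order, every $v \leq w$ then has a reduced expression of the form $s_\alpha^{\varepsilon} s_{\beta_{i_1}} \cdots s_{\beta_{i_j}}$ with $\varepsilon \in \{0,1\}$ and $i_1 < \cdots < i_j$. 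The ``tail'' $c_v := s_{\beta_{i_1}} \cdots s_{\beta_{i_j}}$, being a product of distinct simple reflections, is itself a Coxeter-type element.

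A two-case analysis should then finish the argument. If $\varepsilon = 0$, or if $\varepsilon = 1$ and $s_\alpha \notin {\rm supp}(c_v)$, then $v$ is itself Coxeter-type, and taking $J = \emptyset$ the spherical characterization makes $X_{vB}$ a spherical (indeed toric) $T$-variety. If instead $\varepsilon = 1$ and $s_\alpha \in {\rm supp}(c_v)$, I would take $J = \{s_\alpha\}$, observe that $w_{0,J} = s_\alpha$ and that the reduced decomposition $v = s_\alpha \cdot c_v$ satisfies $\ell(v) = 1 + \ell(c_v) = \ell(w_{0,J}) + \ell(c_v)$ with $c_v$ Coxeter-type, and conclude by the same characterization that $X_{vB}$ is a spherical $L_{\{s_\alpha\}}$-variety. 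The containment $L_{\{s_\alpha\}} \subseteq L_{\mc{I}_w}$ is automatic, since $s_\alpha w = c < w$ places $s_\alpha$ in $\mc{I}_w$.

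Because the two background classifications do the substantive work, I do not expect a serious obstacle beyond the bookkeeping in this dichotomy. The one small item that warrants attention is verifying, in the second case, that $L_{\{s_\alpha\}}$ actually acts on $X_{vB}$, i.e.\ that $s_\alpha \in \mc{I}_v$; this is immediate from $s_\alpha v = c_v < v$ in the reduced decomposition $v = s_\alpha c_v$.
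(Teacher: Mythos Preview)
There is a genuine gap at the very first step. In the definition of ``doubly-spherical'' the ambient reductive group is $L_{\mc{I}_w}$, so the Borel subgroup whose orbit closures you must examine is $B_{L_{\mc{I}_w}} := B \cap L_{\mc{I}_w}$, not the full Borel $B$ of $G$. The $B_{L_{\mc{I}_w}}$-orbit closures in $X_{wB}$ are \emph{not} the Schubert subvarieties in general. For a concrete instance take $G = SL(4,k)$ and $w = s_1 s_2 s_3 s_1$; this is nearly toric with $\alpha = \alpha_1$ and $c = s_2 s_3 s_1$, and one checks $\mc{I}_w = \{\alpha_1,\alpha_2\}$. The point $\dot{s}_3 B/B \in X_{wB}$ is fixed by each of $T$, $U_{\alpha_1}$, $U_{\alpha_2}$, $U_{\alpha_1+\alpha_2}$ (since $s_3$ sends each of these positive roots to a positive root), hence it is fixed by all of $B_{L_{\mc{I}_w}}$. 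Thus $\{\dot{s}_3 B/B\}$ is itself a $B_{L_{\mc{I}_w}}$-orbit closure, yet it is not a Schubert subvariety (the only zero-dimensional Schubert variety is $\{B/B\}$). Your subword dichotomy never sees this orbit closure.

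What you have actually proved---that every $X_{vB}$ with $v \leq w$ is spherical for either $T$ or $L_{\{\alpha\}}$---is correct and close in spirit to the paper, but it does not cover all $B_{L_{\mc{I}_w}}$-orbit closures. The paper sidesteps the identification entirely: it takes an arbitrary $B_{L_{\mc{I}_w}}$-orbit closure $Y$, notes that $Y$ is automatically $B_L$-stable for $L = L_{\{\alpha\}}$, and then argues a dichotomy: either $Y$ is already a toric $T$-variety, or $Y$ is in fact $L$-stable, in which case it is a closed $L$-stable subvariety of the $L$-spherical variety $X_{wB}$ and hence $L$-spherical. To repair your argument you would need to supply exactly this step---that a non-toric $B_{L_{\mc{I}_w}}$-orbit closure is stable under $U_{-\alpha}$---rather than assume the orbit closure is of the form $X_{vB}$.
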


Next, we will discuss the structure of our paper and mention some additional results. 
In the next preliminaries section, we setup our notation and review some basic notions from algebraic group theory and setup our notation.
In Section~\ref{S:Horospherical} we prove our first main result, Theorem~\ref{intro:T1}, regarding the horospherical Schubert varieties.
Once we obtain the characterization of the horospherical Schubert varieties, it becomes a natural question to identify which of these Shubert varieties are nonsingular, or have what sort of combinatorial properties. 
We discuss some of these questions in the same section.
The purpose of Section~\ref{S:Existence} is to address the question of finding a nonsingular horospherical Schubert variety whose stabilizer in $G$ is a prescribed parabolic subgroup. We answer this question affirmatively in Theorem~\ref{T:prescribed}.
In Section~\ref{S:Generic}, we prove our second main result, that is, Theorem~\ref{intro:T2}. 
In Section~\ref{S:Closed}, we discuss the closed $L_{J}$-orbits in the Schubert variety $X_{wB}$. 
In Theorem~\ref{T:itissimple}, we show that the Schubert variety $X_{wB}$ has a unique closed $L_{J}$-orbit if and only if $w=w_{0,J}$.
This theorem yields at once that a Schubert variety is a wonderful variety if and only if its isomorphic to a full flag variety. 
(Wonderful varieties are introduced at the beginning of Section~\ref{S:Closed}.) The purpose of Section~\ref{S:Connectedness} is to discuss the connectedness properties of the stabilizers of points in general positions. 
It turns out that this is a delicate question. 
The stabilizer subgroups of points in general position turn out to be connected if we assume that $G$ is of adjoint type.
However, if we do not assume the adjointness of $G$, counterexamples are found. 
We close our paper by Section~\ref{S:Doubly} where we prove our last two Theorems~\ref{intro:T3} and~\ref{intro:T4}.

\section{Preliminaries}\label{S:Preliminaries}

We introduced some of our notation in the introductory section, but here we will establish additional notation and clarify certain terminology. Before proceeding, we would like to direct beginners to a few useful resources on Schubert varieties and spherical varieties. Throughout most of this article, our focus will be on the Schubert varieties of the full flag variety of a connected reductive group $G$. For a thorough introduction to Schubert varieties and their geometry, we recommend the references~\cite{Brion_Lectures, BrionKumar, BrownLakshmibai}. For spherical varieties, some good resources are~\cite{Timashev}, \cite{Perrin2018}, and~\cite{Gandini}.

Let $G$ be a connected reductive group. Since the central torus is contained in every Borel subgroup, the assumption that $G$ is a connected semisimple group does not weaken any of our results. Consequently, we will adopt this assumption throughout the paper. Additionally, where appropriate, we will assume that $G$ is simply connected; this assumption likewise does not cause loss of generality.

The set of positive (resp. negative) roots determined by the triplet $(G,B,T)$ is denoted by $\mc{R}^+$ (resp. by $\mc{R}^-$).
It is convenient to write $\beta >0$ to indicate that $\beta$ is an element of $\mc{R}^+$. 
Similarly, $\beta < 0$ means that $\beta \in \mc{R}^-$. 
The set of simple reflections of $W$ determined by $B$ is denoted by $S$.  
For $w\in W$, the support of $w$ is defined by 
$$
\mathrm{supp}(w) := \{s\in S\mid s \leqslant w\}.
$$
Here, $\leqslant$ stands for the Bruhat-Chevalley order that is defined by 
$$
v\leqslant w \iff X_{vB} \subseteq X_{wB} \qquad (v,w\in W).
$$
If the set of simple roots is given by $\mc{S}=\{\alpha_1,\,\ldots,\,\alpha_n\}$, then 
the simple reflection in $W$ corresponding to the simple root $\alpha_i$, where $1\leq i \leq n$, will be denoted by $s_{i}$. 
Hence, in the notation of the introduction, we have $S = \{s_1,\dots, s_n\}$.
For $w\in W$, the {\em length} of $w$, denoted by $\ell(w)$, is the minimum number of simple reflections required to write $w$ as a product.
Then we have the {\em length function}, $w\mapsto \ell(w)$, $w\in W$, which is identical to the dimension function $w\mapsto \dim X_{wB}$, $w\in W$.
Another formulation of the length function is as follows. 
For $w\in W$, we set 
$$
\mc{R}^{+}(w):=\{\beta\in \mc{R}^{+}: w(\beta)<0\}\quad\text{and}\quad \mc{R}^+(w^{-1}):=\{\beta\in \mc{R}^+ : w^{-1}(\beta)<0\}.
$$ 
Then we have $\ell(w) = | \mc{R}^+(w)| = | \mc{R}^+(w^{-1})|$. 
We now consider in passing the set of simple roots that are contained in $\mc{R}^+(w^{-1})$: 
$$
\mc{I}_w:= \mc{S}\cap \mc{R}^+(w^{-1}).
$$
It turns out that this subset of simple roots retains a great deal of information about the geometry of $X_{wB}$. 
Indeed, the stabilizer of the Schubert variety $X_{wB}$ in $G$ is given by the parabolic subgroup
generated by the Borel subgroup $B$ together with all root subgroups $U_{-\alpha}$, where $\alpha \in \mc{I}_w$. 
This is a consequence of the fact that the {\em left descent set} of $w$, that is $J(w):=\{ s \in S \mid \ell(s  w)<\ell(w)  \}$, is given by 
$$
J(w)=\{ s_\alpha \mid \alpha \in \mc{I}_w\}.
$$
Hereafter, for $\mc{J}\subseteq \mc{S}$, by $P_{\mc{J}}$ we will denote the corresponding {\em standard parabolic subgroup} generated by $B$ and all the root subgroups $U_{-\alpha}$, where $\alpha \in \mc{J}$. 
The {\em standard Levi subgroup} of $P_{\mc{J}}$ is the unique Levi subgroup containing $T$. 
Such a Levi subgroup will be denoted by $L_{\mc{J}}$.

\medskip

A product of simple reflections $s_{i_1}s_{i_2}\cdots s_{i_r}$ is called a {\em reduced expression} of $w$ if 
the following holds: 
$$
w= s_{i_1}s_{i_2}\cdots s_{i_r}\quad\text{ and }\quad\ell(w) = r.
$$ 
A {\em Coxeter type element} in $W$ is an element of the form $s_{i_1}s_{i_2}\cdots s_{i_r}$, where $s_{i_1},\dots, s_{i_r}$ are mutually distinct simple reflections. 
Reduced expressions are useful for understanding the Bruhat-Chevalley order. 
In particular, the theorem of ``Subword Property'' asserts that, for a reduced expression $w=s_{i_1}\cdots s_{i_k}$
in $W$ and $u\in W$, we have $u\leqslant w$ if and only if a reduced expression of $u$ is a subword of $s_{i_1}\cdots s_{i_k}$.
Here, by a {\em subword} we mean a product of the form $s_{i_{j_1}} s_{i_{j_2}} \cdots s_{i_{j_l}}$ for some 
$i_1 \leq i_{j_1} < i_{j_2} <\cdots < i_{j_l} \leq i_k$.

Let $\beta\in \mc{R}$.
The {\em root subgroup associated with $\beta$} is a one-dimensional unipotent group, denoted by $U_\beta$, and defined 
as the image of a homomorphism $x_\beta : (k,+)\to G$ satisfying the following identity 
for every $a\in k$ and $t\in T$: 
\begin{align*}
t x_\beta(a) t^{-1} = x_\beta(\beta(t)a).
\end{align*}
Then $G$ (resp. $B$) is generated by $T$ and all $U_{\pm \alpha}$ (resp. $T$ and all $U_\alpha$), where $\alpha \in \mc{S}$.
It also worth noting that for every root $\beta\in \mc{R}$, there is an element $w\in W$ and a simple root $\alpha\in \mc{S}$ such that $\beta = w\alpha$
implying that the corresponding 1-dimensional unipotent subgroup $U_{\beta}$ is given by $U_\beta = \dot{w} U_{\alpha} \dot{w}^{-1}$.

The one-dimensional multiplicative group will be denoted by $\mathbb{G}_m$.

\section{Horospherical Elements of $W$}\label{S:Horospherical}

Horospherical varieties have a pivotal place in the theory of spherical varieties as shown by Brion and Pauer in~\cite{BrionPauer}.  
In this section we will identify the horospherical Schubert varieties. 
It will not surprise the reader to find that the structure of the horospherical Schubert varieties has many interesting features. 
We begin with proving one direction of our first main theorem.

\begin{Proposition}\label{P:horo1}
Let $w\in W$ be an element such that $w = w_{0,J} c$ for some $J\subseteq \mc{S}$ and $\ell(w) = \ell(w_{0,J}) + \ell(c)$. 
If, in addition, the supports of $c$ and $w_{0, J}$ are disjoint, then the Schubert variety $X_{wB}$ is a horospherical $L_{J}$-variety. 
\end{Proposition}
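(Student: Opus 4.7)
The plan is to combine the $L_J$-sphericity of $X_{wB}$, established in~\cite{CanSaha2023, GaoHodgesYong} from the factorization hypothesis, with the explicit construction of a point of the dense open $L_J$-orbit whose stabilizer contains the unipotent radical $U_J$ of the Borel $B_J := B \cap L_J$. This is the maximal unipotent subgroup of $L_J$, so producing such a point proves the horospherical property. Set $U^c := \prod_{\beta \in \mc{R}^+(c)} U_\beta$, so that $BcB/B = U^c \cdot \dot{c}B/B$.

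First I would show that $P_J \cdot \dot{c}B/B$ is dense in $X_{wB}$. The hypothesis $\mathrm{supp}(c) \cap J = \emptyset$ implies that every simple reflection occurring in $c$ fixes the coefficient of each $\delta \in J$ and adds only non-negative multiples of simple roots in $\mathrm{supp}(c)$; hence $c^{-1}\delta > 0$ for all $\delta \in J$. Equivalently, $c$ is a minimum-length representative in its coset in $W_J\backslash W$, so the parabolic decomposition $W = W_J \cdot {}^{J}W$ yields $\ell(uc) = \ell(u) + \ell(c)$ for every $u \in W_J$. Combining with $P_J = \bigsqcup_{u \in W_J} B\dot{u}B$ gives
\[
P_J\cdot\dot{c}B/B \;=\; \bigsqcup_{u \in W_J} B\dot{uc}B/B,
\]
an irreducible subvariety of $X_{wB}$ of maximum dimension $\ell(w_{0,J}) + \ell(c) = \ell(w)$, attained at $u = w_{0,J}$ and containing the open cell $B\dot{w}B/B$. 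Therefore $P_J \cdot \dot{c}B/B$ is dense in $X_{wB}$. Since $L_J \cdot B = P_J$, the equality $L_J \cdot BcB/B = P_J \cdot \dot{c}B/B$ shows that the open $L_J$-orbit $\mathcal{O} \subseteq X_{wB}$ meets the Schubert cell $BcB/B$.

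Next I would show that $U_J$ fixes every point of $BcB/B$. Writing such a point as $u\dot{c}B/B$ with $u \in U^c$, the invariance condition for $v \in U_J$ translates into $\dot{c}^{-1}u^{-1}vu\dot{c} \in B$. The identity $u^{-1}vu = v \cdot [v^{-1}, u^{-1}]$ reduces the problem to two positivity claims: $c^{-1}\alpha > 0$ for every positive root $\alpha$ of $L_J$, and $c^{-1}\gamma > 0$ for each root $\gamma = m\alpha + n\beta$, with $m, n \geq 1$, $\alpha$ a positive root of $L_J$, and $\beta \in \mc{R}^+(c)$, that appears as a summand of some element of $[U_J, U^c]$. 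Since the simple reflections occurring in $c$ never alter $J$-coefficients, the $J$-coefficients of $c^{-1}\alpha$ coincide with those of $\alpha$, while the $J$-coefficients of $c^{-1}\gamma$ equal $m$ times those of $\alpha$ (because $c^{-1}\beta$ is supported on $\mathrm{supp}(c)$). In both cases these coefficients are non-negative and not all zero; since the simple-root coefficients of a root share one sign, $c^{-1}\alpha$ and $c^{-1}\gamma$ must be positive.

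Selecting any $x \in \mathcal{O} \cap BcB/B$, the first step identifies $x$ as a point in general position for the $L_J$-action on $X_{wB}$, while the second step yields $U_J \subseteq \mathrm{Stab}_{L_J}(x)$. Because $U_J$ is maximal unipotent in $L_J$, the generic $L_J$-stabilizer is a horospherical subgroup of $L_J$, proving that $X_{wB}$ is a horospherical $L_J$-variety. The main obstacle is the commutator bookkeeping in the second step, where the positivity of $c^{-1}\gamma$ must hold uniformly in the multiplicities $m, n$; this is handled by the sign-of-coefficients argument above, which applies uniformly in simply-laced and non-simply-laced types.
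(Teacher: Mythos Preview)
Your argument is correct and shares the paper's overall strategy: invoke the $L_J$-sphericity of $X_{wB}$ from \cite{CanSaha2023} and exhibit a point of the open $L_J$-orbit whose stabilizer contains the maximal unipotent subgroup $U_J$. The execution, however, is different. The paper fixes the single point $\xi = u_{\beta_{i_1}}(1)\cdots u_{\beta_{i_r}}(1)\dot{c}B/B$, observes that its stabilizer $H$ in $L_J$ contains both $U_J$ and the subtorus $T_c=\bigcap_j\ker\beta_{i_j}$, and then runs a dimension comparison $\dim L_J-\dim H\le \ell(w_{0,J})+r=\dim X_{wB}$ to force $L_J\cdot\xi$ to be the open orbit. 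You bypass $T_c$ and the dimension count altogether: the parabolic Bruhat decomposition $P_J=\bigsqcup_{u\in W_J}B\dot uB$ together with $c\in{}^{J}W$ shows directly that $L_J\cdot B\dot cB/B=P_J\cdot\dot cB/B$ contains the big cell $B\dot wB/B$, so the open orbit already meets $B\dot cB/B$; then you prove that $U_J$ fixes \emph{every} point of that cell via the $J$-coefficient positivity argument. Your route gives a cleaner localisation of the open orbit and supplies the root-theoretic justification for the $U_J$-fixation that the paper compresses into one sentence; the paper's route, in exchange, pins down the identity component of the generic stabilizer as $T_c\cdot U_J$, information your argument does not recover.

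One small notational slip: with the paper's convention $\mc{R}^+(v)=\{\beta>0:v(\beta)<0\}$, the roots parametrising $U^c$ in $B\dot cB/B=U^c\dot cB/B$ are those of $\mc{R}^+(c^{-1})$, not $\mc{R}^+(c)$. This is harmless for you, since both inversion sets are supported on $\mathrm{supp}(c)$ and your positivity argument only uses that fact.
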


\begin{proof}
We fix a reduced expression for $c$: 
$$
c=s_{i_{1}}s_{i_{2}}\cdots s_{i_{r}}.
$$ 
For $1\le j\le r$, set $\beta_{i_j} :=s_{i_1}\cdots s_{i_{j-1}}(\alpha_{i_{j}})$. 
Then the open cell of $X_{cB}$ is given by  
\[
U\dot{c}B/B=U_{\beta_{i_{1}}}U_{\beta_{i_{2}}}\cdots U_{\beta_{i_{r}}}\dot{c}B/B.
\]
Let $\xi \in U\dot{c}B/B$ be the point defined by $\xi :=u_{\beta_{i_{1}}}(1)\cdots u_{\beta_{i_{r}}}(1)\dot{c}B/B$. 
Let $H$ denote the stabilizer of $\xi$ in $L_{J}$.
We seek a generating set for $H$. 
Let $T_c$ denote the diagonalizable group  
$$
T_{c}:=\bigcap\limits_{j=1}^{r} \ker \beta_{i_{j}}.
$$
Since every element of $T_c$ fixes $\xi$, $T_c$ is contained in $H$. 
At the same time, since the support of $c$ and $w_{0,J}$ do not have any common elements, 
every root subgroup $U_\alpha$, where $\alpha \in J$ stabilizes $\xi$. 
Hence, we see that $\big \langle U_{\alpha} \mid \alpha \in J\big\rangle$ is a subgroup of $H$.
It follows that we have the inclusion,
$$
\big  \langle T_c, \ U_{\alpha} \mid \alpha \in J\big\rangle \subseteq H.
$$ 
Since $\dim T_c = n-r$ and the dimension of the unipotent group $\prod_{\alpha \in \mc{R}^{+}(w_{0,J}^{-1})} U_\alpha$ is given by $\ell(w_{0,J})$,
we have the inequality, 
$$
n-r +\ell(w_{0,J})\ \leq \ \dim H.
$$
Evidently, the dimension of $L_{J}$ is given by $\dim L_{J}=n+2\ell(w_{0,J})$.
Since $L_{J}$ has an open orbit in $X_{wB}$, and since $\xi$ is a point in $X_{wB}$, 
the following inequalities follow: 
\begin{align*}
\dim X_{wB} & \leq \dim L_{J} - \dim H \\
& \leq \ (n+2\ell(w_{0,J}))- (n-r+\ell(w_{0,J})) \\
& = r + \ell(w_{0,J}).
\end{align*}
But we already know that $\dim X_{wB} = \ell(w) = \ell (w_{0,J})+\ell(c)$, implying that 
$$
\dim X_{wB} = \dim L_{J} - \dim H.
$$
In other words, the dimension of the orbit $L_{J}\cdot \xi \cong L_{J}/H$ is maximal. 
By the uniqueness of the open orbit, this orbit must be the open orbit. 
Therefore, the stabilizer of a point in general position in $X_{wB}$ is conjugate to $H$.
Since $H$ contains the the subgroup $\langle U_{\alpha}: \alpha\in J\rangle$, we see that $L_{J}/H$ is a horospherical homogeneous space. 
This finishes the proof of our assertion. 
\end{proof}

We now proceed to prove the converse of our previous proposition.

\begin{Proposition}\label{P:horo2}
Let $w\in W$. 
Let $X_{wB}$ denote the corresponding Schubert variety. Let $J\subseteq \mc{I}_{w}$.
If $X_{wB}$ is a horospherical $L_{J}$-variety, then $w=w_{0,J}c$ for some Coxeter type element such that $\ell(w)=\ell(w_{0,J})+\ell(c)$ and ${\rm supp}(c)\cap {\rm supp}(w_{0,J})=\emptyset$.
\end{Proposition}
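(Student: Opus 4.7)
Since every horospherical $L_J$-variety is in particular spherical, the characterization of~\cite{CanSaha2023, GaoHodgesYong} gives a factorization $w = w_{0,J}c$ with $c = s_{i_1}\cdots s_{i_r}$ a reduced Coxeter-type element and $\ell(w) = \ell(w_{0,J}) + \ell(c)$. What remains is to prove $\mathrm{supp}(c) \cap J = \emptyset$, and I would argue by contradiction: suppose that there exists $\alpha \in J \cap \mathrm{supp}(c)$, say $\alpha = \alpha_{i_k}$ for some index $k$.

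Take $\xi := u_{\beta_{i_1}}(1)\cdots u_{\beta_{i_r}}(1)\dot{c}B/B$ as in Proposition~\ref{P:horo1}, and write $u := \prod_j u_{\beta_{i_j}}(1)$ and $H := \mathrm{Stab}_{L_J}(\xi)$. Since $H = L_J \cap u\dot{c}B\dot{c}^{-1}u^{-1}$, the group $H$ is contained in a Borel subgroup of $G$, hence is solvable. Writing $H^{\circ} = T^{\circ}\cdot U_H$ for its Levi decomposition, with $T^{\circ}$ a maximal torus and $U_H$ the unipotent radical of $H^{\circ}$, horosphericality of $X_{wB}$ would force $H$ to contain a maximal unipotent subgroup of $L_J$; as a connected unipotent subgroup of the solvable group $H^{\circ}$, such a subgroup must be contained in $U_H$, so $\dim U_H \geq \ell(w_{0,J})$ and therefore $\dim T^{\circ} = \dim H^{\circ} - \dim U_H \leq n-r$.

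The plan is to contradict this upper bound by exhibiting a subtorus of $H$ of dimension $n-r+1$. The diagonalizable subgroup $T_c := \bigcap_{j} \ker\beta_{i_j}$ has dimension $n-r$ and lies in $H$ (by the argument of Proposition~\ref{P:horo1}), so it suffices to find a 1-parameter subgroup of $H$ whose identity component is not contained in $T_c^{\circ}$. The construction is via the Chevalley commutation relations: I expand $u_{\alpha}(-b)\cdot\xi$ as a product of root subgroup elements acting on $\dot{c}B/B$, and show that for a suitable $b=b_0\in k$ the point $\xi' := u_\alpha(-b_0)\xi$ has one fewer $u_{\beta_{i_j}}$-factor than $\xi$ (for an appropriate index $j$). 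Consequently the $T$-stabilizer of $\xi'$ is $\bigcap_{l \neq j}\ker\beta_{i_l}$, a subtorus of dimension $n-r+1$. Conjugating this subtorus by $u_\alpha(b_0)$ then produces a subtorus of $H$ of dimension $n-r+1$, yielding the desired contradiction.

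The main obstacle is locating the value $b_0$ and the corresponding pair of indices $l, j$. Concretely, this requires finding $l$ and $j$ for which the Chevalley commutator $[u_\alpha(-b), u_{\beta_{i_l}}(1)]$ has a nonzero contribution in the root subgroup $U_{\beta_{i_j}}$; equivalently, $\alpha + \beta_{i_l} = \beta_{i_j}$ must hold in $\mc{R}$, and the associated structure constant must be nonzero. Establishing the existence of such indices for each $\alpha \in J \cap \mathrm{supp}(c)$ is a combinatorial statement about the inversion set $\{\beta_{i_1},\ldots,\beta_{i_r}\}$ of the Coxeter-type element $c^{-1}$; I expect it to follow by induction on $k$, exploiting the recursion $\beta_{i_k} = s_{i_1}\cdots s_{i_{k-1}}\alpha_{i_k}$ and the positivity of the Cartan integers $-\langle \alpha_{i_k}, \alpha_{i_{k-1}}^\vee\rangle$.
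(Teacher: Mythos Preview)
Your overall strategy---bound $\dim T^\circ$ from above using horosphericality, then from below by exhibiting a large torus inside $H$---is reasonable, but the argument as written has two gaps, one minor and one substantial.

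\medskip

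\textbf{Minor gap.} You invoke ``horosphericality of $X_{wB}$ would force $H$ to contain a maximal unipotent subgroup of $L_J$'', but this requires the specific point $\xi$ to lie in the open $L_J$-orbit. Proposition~\ref{P:horo1} proves this only under the disjoint-support hypothesis you are trying to establish, so you cannot cite it. The claim is nevertheless true: since $P_J\cdot X_{cB}=X_{wB}$ and $\mc{R}_u(P_J)\subset B$ stabilizes $X_{cB}$, one has $L_J\cdot X_{cB}=X_{wB}$; hence the open $L_J$-orbit meets $X_{cB}$ in a nonempty $T$-stable open set, which therefore contains the dense $T$-orbit and in particular $\xi$. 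You should supply this.

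\medskip

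\textbf{Substantial gap.} Your construction of the $(n-r+1)$-dimensional torus rests on finding indices $l,j$ with $\alpha+\beta_{i_l}=\beta_{i_j}$ (and nonzero structure constant). This fails already in type $A_3$: take $c=s_1s_3s_2$ and $\alpha=\alpha_2$. Then $\{\beta_1,\beta_2,\beta_3\}=\{\alpha_1,\ \alpha_3,\ \alpha_1+\alpha_2+\alpha_3\}$, and $\alpha_2+\beta_l$ is $\alpha_1+\alpha_2$, $\alpha_2+\alpha_3$, or a non-root---none of which is a $\beta_j$. So the mechanism you propose (a single first-order commutator hitting another $\beta$) does not exist in general, and the ``induction on $k$'' you gesture at cannot produce it. One can salvage the conclusion that some coordinate $a_j(b)$ of $u_\alpha(-b)\xi$ vanishes for a suitable $b_0$ by a different route (show $U_\alpha$ does not fix $\xi$, then use that a nonconstant morphism $\mathbb{A}^1\to C_c\cong\mathbb{A}^r$ cannot land entirely in $(\mathbb{G}_m)^r$), but the first step---that $U_\alpha$ moves $\xi$---essentially requires the $T$-normalization idea that drives the paper's own argument.

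\medskip

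\textbf{Comparison with the paper.} The paper avoids both issues by working in the opposite direction: it starts from an \emph{arbitrary} point $x$ in general position whose stabilizer already contains $U_{L_J}$ (such $x$ exists by definition of horosphericality, after an $L_J$-conjugation), shows via Bruhat decomposition and a dimension count that $x$ must lie in the big $T$-cell of $X_{cB}$, and then observes that since $T$ normalizes $U_{L_J}$, the group $U_{L_J}$ must fix the entire $T$-orbit $T\cdot x$, hence all of $X_{cB}$ by density. Applying this to the $T$-fixed points $\dot s_{i_j}B/B$ immediately forces $\alpha\neq\alpha_{i_j}$ for every $\alpha\in J$. This normalization trick replaces your delicate root-by-root commutator analysis with a one-line density argument.
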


\begin{proof}
Since $X_{wB}$ is a horospherical $L_{J}$-variety, $X_{wB}$ is a spherical $L_{J}$-variety. Therefore by \cite[Theorem 6.5]{CanSaha2023} we have $w=w_{0,J}c$ for some Coxeter type element $c$ such that $\ell(w)=\ell(w_{0,J})+\ell(c)$.

Let $x \in X_{wB}$ be a point in the general position whose stabilizer contains the maximal unipotent subgroup $U_{L_{J}}$
of $B\cap L_{J}$. 
Without loss of generality, we may assume that $x$ is of the form 
$$
x=u\dot{v}B/B
$$ 
for some $v=v_1c_1$, where $v_1\le w_{0,J}$ and $c_1\le c$ with $\ell(v)=\ell(v_1)+\ell(c_1)$, 
and $u$ is in the unipotent subgroup $U_{v}:=\prod\limits_{\beta \in \mc{R}^+(v^{-1})}U_{\beta}$.

We now make a crucial observation regarding the factor $v_1$ of $v$.
If $v_1\neq id$, then $x$ cannot be fixed by all the elements of $U_{L_{J}}$. 
For example, let $\alpha\in \mc{S}\cap \mc{R}^+(v_{1}^{-1})$. 
Then we have 
$$
U_{\alpha}(1)x\neq x.
$$ 
To see this, notice that $U_{\alpha}(1)u\in U_{v}$ and that $u\in U_{v}$.
Then we have $u^{-1}U_{\alpha}(1)u\in U_{v}\setminus \{id\}$, showing that $U_\alpha(1)$ cannot fix $x$. 
Since $U_{L_J}$ is contained in the stabilizer of $x$, we conclude that $v_1 = id$. 

Now, since $v=c_1$, we know that $v\le c$. 
Since $\dim L_{J}\cdot x=\dim X_{wB}=\ell(w)$ and $U_{L_{J}}$ fixes $x$, by the dimensionality  arguments that we used in the last part of Proposition~\ref{P:horo1}, we see that $c_1= c$. 
Let $s_{i_1}s_{i_2}\cdots s_{i_{r}}$ be a reduced expression of $c$. 
As in the proof of Proposition~\ref{P:horo1}, for $1\le j\le r$, let us denote by $\beta_{i_j}$ the root 
$$
\beta_{i_j}:=s_{i_1}s_{i_{2}}\cdots s_{i_{j-1}}(\alpha_{i_{j}}).
$$
Then our point $x=u\dot{c} B/B$ is written in the form 
$$
x=U_{\beta_{i_1}}(a_1)\cdots U_{\beta_{i_r}}(a_r) \dot{c}B/B
$$ 
for some $(a_1,\ldots ,a_r)\in (k^{\times})^{r}$.
Let us consider the set 
$$
V:=\bigg\{U_{\beta_{i_1}}(a_1)U_{\beta_{i_2}}(a_2)\cdots U_{\beta_{i_r}}(a_r)\dot{c}B/B \mid (a_1,a_{2},\ldots ,a_r)\in (k^{\times})^{r} \bigg\}.
$$ 
Then $V$ is the open $T$-orbit of $x$ for the left action of $T$ on the toric Schubert variety $X_{cB}$. 
Since $U_{L_{J}}$ fixes the point $x\in V$ and $T$ normalizes $U_{L_{J}}$, it follows that $U_{L_{J}}$ fixes all points of $V$. 
But $V$ is a dense subset of $X_{cB}$.
It follows that $U_{L_{J}}$ acts trivially on $X_{cB}$.
Therefore, $U_{L_{J}}$ acts trivially on $X_{s_{i_{j}}B}$ for every $j\in \{1,\dots, r\}$.
This means that there are no simple roots $\alpha \in J$ such that $s_\alpha = s_{i_j}$ for some $j\in \{1,\dots,r\}$. 
In other words, we have ${\rm supp}(c)\cap {\rm supp}(w_{0,J})=\emptyset$.
Hence, the proof of our proposition is complete. 
\end{proof}

We are now ready to present a proof of the first main result of our article, Theorem~\ref{intro:T1}.
We recall its statement for convenience. 
\medskip

Let $J\subseteq \mc{I}_{w}$. The Schubert variety $X_{wB}$ is a horospherical $L_{J}$-variety if and only if $w=w_{0,J}c$ for some Coxeter type element such that $\ell(w)=\ell(w_{0,J})+\ell(c)$ and ${\rm supp}(c)\cap {\rm supp}(w_{0,J})=\emptyset$. 
\medskip

\begin{proof}[Proof of Theorem~\ref{intro:T1}]
The proof follows from Propositions~\ref{P:horo1} and~\ref{P:horo2}.
\end{proof}

We now proceed to state some corollaries of Theorem~\ref{intro:T1}.

\begin{Corollary}\label{C:productofintervals}
Let $J\subseteq \mc{I}_w$. Let $X_{wB}$ be a horospherical $L_{J}$-variety with $w= w_{0,J} c$, where $\ell(w) = \ell(w_{0,J})+\ell(c)$
and ${\rm supp}(c) \cap {\rm supp}(w_{0,J}) = \emptyset$. 
Then there is a poset isomorphism between the lower interval $[id, w] \subseteq W$ and the product of Bruhat-Chevalley orders,
$$
[id,w] \ \xrightarrow{\scaleobj{1.5}{\sim}} \ [id, w_{0,J}]\times [id, c].
$$
Furthermore, the interval $[id, w_{0,J}]$ is isomorphic to the Bruhat-Chevalley order on the parabolic subgroup $W_{J}$,
and the interval $[id, c]$ is isomorphic to the Boolean lattice on $\{1,\dots, r\}$, where $r=\ell(c)$.
\end{Corollary}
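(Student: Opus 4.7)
The plan is to leverage the Subword Property of the Bruhat--Chevalley order with respect to a carefully chosen reduced expression of $w$, and to use the disjointness of supports to read off a canonical factorization. Concretely, I would fix reduced expressions $w_{0,J} = s_{j_1}\cdots s_{j_m}$ and $c = s_{i_1}\cdots s_{i_r}$. Since $\ell(w) = \ell(w_{0,J}) + \ell(c)$, their concatenation is a reduced expression for $w$. I would then define
$$
\Phi : [id,\, w_{0,J}] \times [id,\, c] \longrightarrow [id,\, w], \qquad \Phi(u,v) := uv,
$$
and prove that it is an isomorphism of posets.

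For well-definedness and the length-additivity $\ell(uv) = \ell(u) + \ell(v)$, one produces reduced subwords from the chosen expressions of $w_{0,J}$ and $c$ representing $u$ and $v$, and concatenates them to obtain a subword of the fixed reduced expression of $w$. That this concatenation is again reduced follows from a root-theoretic check: the positive roots attached to the first block are supported in $J$, while those attached to the second block have the form $u(\gamma)$ with $\gamma$ a positive root supported in ${\rm supp}(c)$; because $u\in W_J$ leaves the coefficient on any simple root outside $J$ unchanged, these are positive roots distinct from those of the first block. Injectivity of $\Phi$ then follows from the standard parabolic intersection formula $W_J\cap W_{{\rm supp}(c)}=W_{J\cap {\rm supp}(c)}=\{id\}$: the equation $uv=u'v'$ rewrites as $(u')^{-1}u=v'v^{-1}$, which forces $u=u'$ and $v=v'$.

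Surjectivity is the main step. For $x\in [id,w]$, I would fix a reduced expression for $x$ that appears as a subword of the chosen reduced expression for $w$. Because the two blocks draw from disjoint sets of simple reflections, any such subword splits as a concatenation $u\cdot v$, where $u\in W_J$ is assembled from the $w_{0,J}$-block and $v\in W_{{\rm supp}(c)}$ from the $c$-block. From $\ell(x)=\ell(u)+\ell(v)$, together with the inequality $\ell(uv)\le \ell(u)+\ell(v)$, each piece must be individually reduced, so $u\leq w_{0,J}$, $v\leq c$, and $x=\Phi(u,v)$. The order-isomorphism property then follows in both directions by the same manipulation: componentwise comparability in the product poset yields subword comparability in $W$, and conversely the uniqueness of the factorization recovers componentwise comparability from any Bruhat relation $uv\leq u'v'$.

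I expect the main obstacle to be verifying that these subword manipulations always yield reduced expressions; this is precisely where the disjoint-support hypothesis is indispensable, since it prevents any braid or nil-move from mixing the two blocks. Once $\Phi$ is established as an order-isomorphism, the remaining identifications are standard. The interval $[id,w_{0,J}]$ coincides with $W_J$ under Bruhat order because $w_{0,J}$ is the longest element of $W_J$ and the Bruhat order on $W$ restricts to the intrinsic Bruhat order on the parabolic subgroup $W_J$. For $[id,c]$ with $c$ Coxeter-type, each subset $I\subseteq\{1,\dots,r\}$ yields a unique element $c_I:=\prod_{k\in I} s_{i_k}$ (in the inherited order), which is reduced by the same positivity argument as above, and distinct subsets yield distinct supports inside ${\rm supp}(c)$; hence $[id,c]$ is Boolean of rank $r$.
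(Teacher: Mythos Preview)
Your proposal is correct and follows essentially the same approach as the paper: both arguments fix a reduced expression of $w$ as the concatenation of reduced expressions for $w_{0,J}$ and $c$, invoke the Subword Property together with the disjoint-support hypothesis to factor any $z\leq w$ uniquely as $z=uv$ with $u\leq w_{0,J}$ and $v\leq c$, and then identify the two auxiliary intervals. Your write-up is considerably more detailed than the paper's (which leaves the verification that $z\mapsto(u,v)$ is a poset isomorphism as ``easy to check''), but the underlying idea is identical.
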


\begin{proof}
Let $z\in [id, w]$.
Since ${\rm supp}(w_{0,J}) \cap {\rm supp}(c) = \emptyset$, it readily follows from the subword property of Bruhat-Chevalley order that $z$ has a reduced expression of the form 
$$
z= \underbrace{(s_{m_1}\cdots s_{m_p})}_u\underbrace{(s_{n_1}\cdots s_{n_q})}_v,
$$
where $u\leqslant w_{0,J}$ and $v\leqslant c$.
It is easy to check that the map defined by $z\mapsto (u,v)$, $z\in [id, w]$ gives the desired poset isomorphism.

Our second assertion follows from the fact that $w_{0,J}$ is the maximal element of $W_{J}$, and 
the subword property of the Bruhat-Chevalley order on the interval $[id, c]$.
This finishes the proof of our corollary.  
\end{proof}

\begin{Corollary}
We assume that $G$ is of type $A$, $D$, or $E$. Let $J\subseteq \mc{I}_{w}$. 
Let $X_{wB}$ be a horospherical $L_{J}$-variety such that $w= w_{0,J} c$, where $\ell(w) = \ell(w_{0,J}) + \ell(c)$
and ${\rm supp}(c) \cap {\rm supp}(w_{0,J})= \emptyset$. 
Then the intersection of $X_{wB}$ with the $w_0$-translated Schubert variety $w_0 X_{w_0 w_{0,J} B}$ is a nonsingular toric variety. 
\end{Corollary}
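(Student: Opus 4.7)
The plan is to (i) identify the intersection with a Richardson variety, (ii) show this Richardson variety is isomorphic to $X_c$, and (iii) invoke that $X_c$ is smooth and toric in simply-laced types.

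For (i), observe that $w_0 X_{w_0 u B}=X^{uB}$, where $X^{uB}:=\overline{B^{-}\dot u B/B}$ is the opposite Schubert variety. Hence the intersection equals the Richardson variety
\[
R \ := \ X_{wB}\cap X^{w_{0,J}B},
\]
which is irreducible of dimension $\ell(w)-\ell(w_{0,J})=\ell(c)=r$.

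For (ii), I would prove the equality $R=\dot{w_{0,J}}X_c$ inside $G/B$. The inclusion $\dot{w_{0,J}}X_c\subseteq X_{wB}$ follows from the length-additive identity $B\dot{w_{0,J}}B\cdot B\dot c B=B\dot w B$, which holds since $\ell(w)=\ell(w_{0,J})+\ell(c)$. For the inclusion $\dot{w_{0,J}}X_c\subseteq X^{w_{0,J}B}$, note that $\dot{w_{0,J}}X_c$ is $T$-stable, with $T$-fixed points $\{\dot{w_{0,J}v}B:v\leqslant c\}$. The disjoint-support hypothesis gives $\ell(w_{0,J}v)=\ell(w_{0,J})+\ell(v)$, and hence $w_{0,J}v\geqslant w_{0,J}$ for each $v\leqslant c$, so every such fixed point lies in $X^{w_{0,J}B}$. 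Applying the Bialynicki-Birula decomposition of $G/B$ with respect to a cocharacter adapted to $B^{-}$ --- any $T$-stable closed subvariety $Y\subseteq G/B$ lies in $\bigsqcup_{\dot v B\in Y^{T}}C^{v}$, where $C^{v}:=B^{-}\dot v B/B$ --- we upgrade this fixed-point inclusion to $\dot{w_{0,J}}X_c\subseteq\bigsqcup_{v\geqslant w_{0,J}}C^{v}=X^{w_{0,J}B}$. Irreducibility and the matching dimension $r$ give the equality $R=\dot{w_{0,J}}X_c\cong X_c$.

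For (iii), toric-ness is direct from the proof of Proposition~\ref{P:horo1}: the point $\xi=u_{\beta_{i_1}}(1)\cdots u_{\beta_{i_r}}(1)\dot c B$ has a dense $r$-dimensional $T$-orbit in $X_c$, thanks to the linear independence of the roots $\beta_{i_j}=s_{i_1}\cdots s_{i_{j-1}}(\alpha_{i_j})$. For smoothness, Corollary~\ref{C:productofintervals} shows that $[id,c]$ is a Boolean lattice of rank $r$; every subinterval $[v,c]$ is then Boolean with palindromic Poincar\'e polynomial $(1+q)^{r-\ell(v)}$. The Carrell--Peterson criterion yields that $X_c$ is rationally smooth, and Peterson's theorem on Schubert varieties in simply-laced types A, D, E (where rational smoothness coincides with smoothness) upgrades this to smoothness. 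Hence $R\cong X_c$ is a smooth toric variety.

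The main technical obstacle is step (ii), specifically the passage from a $T$-fixed-point containment to the full set-theoretic inclusion $\dot{w_{0,J}}X_c\subseteq X^{w_{0,J}B}$; the Bialynicki-Birula argument above is the natural tool, but it must be deployed with care to ensure $\dot{w_{0,J}}X_c$ has no unaccounted-for limit points in the opposite cell stratification.
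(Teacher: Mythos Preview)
Your argument is correct, but it follows a different path from the paper's.

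The paper's proof is a two-line black-box application: it identifies the intersection as the Richardson variety $X_{w}^{w_{0,J}}$ and then invokes \cite[Theorem~1.1]{CanSaha2023Toric}, which says (in types $A$, $D$, $E$) that a Richardson variety $X_w^v$ with $\ell(w)-\ell(v)\le \dim T$ is a nonsingular toric variety if and only if the Bruhat interval $[v,w]$ is Boolean. Corollary~\ref{C:productofintervals} supplies the Boolean interval $[w_{0,J},w]\cong[id,c]$, and the proof is finished.

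Your route is more explicit and in some ways more informative: you actually exhibit an isomorphism $R\cong X_{cB}$ via left translation by $\dot w_{0,J}$, and then read off toric-ness and smoothness from known properties of $X_{cB}$. Your Bia{\l}ynicki--Birula step is sound (the variety $\dot w_{0,J}X_{cB}$ is closed and $T$-stable, so every point flows under an anti-dominant one-parameter subgroup to one of its own $T$-fixed points, all of which lie in $X^{w_{0,J}B}$); the concern you flag about ``unaccounted-for limit points'' is already handled by this argument and need not be singled out. Your smoothness step---Carrell--Peterson for rational smoothness from the Boolean interval, then Peterson's ADE theorem---is exactly where the simply-laced hypothesis enters, and matches how it enters in the cited \cite{CanSaha2023Toric}. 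The trade-off: the paper's proof is shorter and leverages an existing characterization; yours is self-contained modulo Carrell--Peterson and Peterson, and yields the extra structural fact that the Richardson variety is a \emph{translated Schubert variety}, not merely abstractly nonsingular and toric.
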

Before proving our result, we point out that the intersection $X_{wB} \cap w_0 X_{w_0 w_{0,J} B}$ is the {\em Richardson variety}, denoted by $X_w^{w_{0,J}}$. 
\begin{proof}
In~\cite[Theorem 1.1]{CanSaha2023Toric}, it is shown that a Richardson variety $X_w^v$, where $\ell(w)-\ell(v) \leq \dim T$, 
is a nonsingular toric variety if and only if the interval $[v,w]$ is a Boolean lattice. 
In our situation, it follows from Corollary~\ref{C:productofintervals} that the interval $[w_{0,J}, w]$ is isomorphic to the lower interval $[id, c]$.
Since $c$ is a Coxeter type element, $[id, c]$ is a Boolean lattice. 
Hence, our claim follows. 
\end{proof}

It worths mentioning here that in~\cite{CanSaha2023Toric}, Can and Saha characterized the toric Richardson varieties.

\begin{Corollary}\label{nonsingularhoro}
Let $w\in W$ and  $J\subseteq S$.
If $c\in W$ is a Coxeter type element of the form $c=s_{i_{1}}s_{i_{i_{2}}}\cdots s_{i_{r}}$ such that 
\begin{enumerate}
\item $w=w_{0,J}c$,
\item $\ell(w)=\ell(w_{0,J})+\ell(c)$,
\item and $s_{i_{j}}s_{i_{k}}=s_{i_{k}}s_{i_{j}}$ for every $\{j,k \}\subset \{1,\dots, r\}$,
\end{enumerate}
then the Schubert variety $X_{wB}$ is a nonsingular horospherical $L_{J}$-variety.
\end{Corollary}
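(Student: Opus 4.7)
The plan is to establish the horospherical property by invoking Theorem~\ref{intro:T1}, and then to prove smoothness by induction on $r=\ell(c)$, realizing $X_{wB}$ as an iterated $\PP^1$-bundle over a smooth base.

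For the horospherical property, by Theorem~\ref{intro:T1} it suffices to check that ${\rm supp}(c)\cap{\rm supp}(w_{0,J})=\emptyset$. Since ${\rm supp}(w_{0,J})=J$, this reduces to $\{i_1,\dots,i_r\}\cap J=\emptyset$. Suppose, toward a contradiction, that $s_{i_k}\in J$ for some $k$. Condition~(3) lets me rewrite $c=s_{i_k}c'$ with $c'=\prod_{j\ne k}s_{i_j}$ and $\ell(c')=r-1$, whence
\[
\ell(w_{0,J}c)\le\ell(w_{0,J}s_{i_k})+\ell(c')=(\ell(w_{0,J})-1)+(r-1)<\ell(w_{0,J})+\ell(c),
\]
which contradicts~(2). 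Hence Theorem~\ref{intro:T1} applies and $X_{wB}$ is a horospherical $L_J$-variety.

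For smoothness, I proceed by induction on $r$. When $r=0$, $X_{w_{0,J}B}=P_J/B\cong L_J/(B\cap L_J)$ is the full flag variety of $L_J$, which is smooth. For the inductive step, set $u_k:=w_{0,J}s_{i_1}\cdots s_{i_k}$ and $Y_k:=X_{u_kB}$, and assume $Y_k$ is smooth. Let $\pi:G/B\to G/P_{\{s_{i_{k+1}}\}}$ denote the canonical $\PP^1$-fibration. Since $\ell(u_{k+1})=\ell(u_k)+1$, the equality $Y_{k+1}=\pi^{-1}(\pi(Y_k))$ holds, and so $Y_{k+1}$ is a $\PP^1$-bundle over $Z_k:=\pi(Y_k)$. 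The smoothness of $Y_{k+1}$ therefore reduces to the smoothness of $Z_k$, which I will obtain by showing that $\pi|_{Y_k}:Y_k\to Z_k$ is an isomorphism.

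The central claim is that $\pi|_{Y_k}$ is bijective on closed points. Two points $vB,v'B\in Y_k$ share an image under $\pi$ only if $v'\in\{v,vs_{i_{k+1}}\}$, so it suffices to forbid $vs_{i_{k+1}}\le u_k$ whenever $v\le u_k$. By the subword property every $v\le u_k$ satisfies ${\rm supp}(v)\subseteq{\rm supp}(u_k)=J\cup\{i_1,\dots,i_k\}$. If $\ell(vs_{i_{k+1}})=\ell(v)+1$ then $i_{k+1}\in{\rm supp}(vs_{i_{k+1}})$; if $\ell(vs_{i_{k+1}})=\ell(v)-1$ then $i_{k+1}\in{\rm supp}(v)$. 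In either case, the support contains $i_{k+1}$, contradicting $vs_{i_{k+1}}\le u_k$, because $i_{k+1}\notin J\cup\{i_1,\dots,i_k\}$: the indices of a Coxeter-type element are distinct, and ${\rm supp}(c)\cap J=\emptyset$ by the first part. Since $\pi|_{Y_k}$ is moreover proper and restricts to an isomorphism on the open cell $Bu_kB/B$ (because $u_k\in W^{\{s_{i_{k+1}}\}}$), it is proper, birational, and bijective on closed points, hence finite and birational. Schubert varieties being normal, Zariski's main theorem yields $\pi|_{Y_k}\cong\mathrm{id}$; thus $Z_k\cong Y_k$ is smooth, and $Y_{k+1}$ is smooth as a $\PP^1$-bundle over a smooth base. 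Iterating to $k=r$ completes the induction.

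The most delicate step is the passage from bijectivity on closed points to an isomorphism of varieties; this rests on the normality of Schubert varieties together with Zariski's main theorem. The rest is a direct bookkeeping exercise with the subword property for the Bruhat-Chevalley order.
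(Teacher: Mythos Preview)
Your horospherical argument matches the paper's: both deduce ${\rm supp}(c)\cap J=\emptyset$ from conditions~(2) and~(3) and then invoke Theorem~\ref{intro:T1}. For nonsingularity the approaches diverge. The paper quotes \cite[Theorem~1.2]{CanSaha2023}, which reduces smoothness of $X_{wB}$ to smoothness of the toric Schubert variety $X_{c^{-1}P_J}\subseteq G/P_J$, and then exhibits an explicit isomorphism $X_{c^{-1}P_J}\cong\prod_{j}X_{s_{i_j}P_J}\cong(\PP^1)^r$. Your inductive $\PP^1$-bundle argument is more self-contained (no external citation needed) and makes the iterated bundle structure of $X_{wB}$ over $P_J/B$ explicit; the paper's route is shorter and additionally yields a concrete product description on the $G/P_J$ side.

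One point deserves tightening: your ``bijectivity on closed points'' is only checked on $T$-fixed points, since the clause ``$v'\in\{v,vs_{i_{k+1}}\}$'' tacitly presumes $v,v'\in W$. The fix is immediate. Because $s_{i_{k+1}}\notin{\rm supp}(u_k)$, every $v\le u_k$ lies in $W^{\{s_{i_{k+1}}\}}$, so each Bruhat cell $BvB/B\subset Y_k$ maps isomorphically onto the cell $BvP/P\subset Z_k$ and distinct cells go to distinct cells; hence $\pi|_{Y_k}$ is bijective on all closed points. Alternatively, the locus in $Z_k$ with positive-dimensional fiber is closed and $T$-stable, so if nonempty it contains a $T$-fixed point whose fiber is then a full $\PP^1$ carrying two $T$-fixed points of $Y_k$---contradicting what you already proved. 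Either patch completes your argument.
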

\begin{proof}
It follows from the fact that the simple reflections that appear in $c=s_{i_{1}}s_{i_{i_{2}}}\cdots s_{i_{r}}$ commute with each other, and since $\ell(w)=\ell(w_{0,J})+\ell(c)$ holds, the supports of $c$ and $w_{0,J}$ are disjoint. 
Hence, Theorem~\ref{intro:T1} implies that $X_{wB}$ is a horospherical $L_{J}$-variety. 
The nonsingularity of $X_{wB}$ can be seen in several different ways.
We apply~\cite[Theorem 1.2]{CanSaha2023}, which states that if $X_{wB}$ is a spherical $L_{J}$-variety,
then $X_{wB}$ is nonsingular if and only if $X_{c^{-1}P_{J}}$ is a nonsingular toric variety in $G/P_{J}$.
To show that $X_{c^{-1}P_{J}}$ is nonsingular, we first prove that there is an isomorphism,
$$
f: X_{c^{-1}P_{J}} \longrightarrow \prod_{j=1}^r X_{s_{i_j}P_{J}}.
$$
The map $f$ is defined as follows. 
Let $x\in X_{c^{-1}P_{J}}$. 
By the Bruhat-Chevalley decomposition, $x$ has a unique expression of the form $u \dot{c}'P_{J}/P_{J}$,
where $u\in U$, and $c'$ is a Coxeter type element such that $c' \leq c^{-1}$.
Note also that the commutation relations $s_{i_{j}}s_{i_{k}}=s_{i_{k}}s_{i_{j}}$, where $\{j,k \}\subset \{1,\dots, r\}$,
implies that $c^{-1} = c$. 
Then $f(x)$ is defined by 
$$
f(x) := (a_1P_{J},a_2P_{J},\dots, a_r P_{J}),
$$
where 
$$
a_j = 
\begin{cases}
u P_{J} & \text{ if $s_{i_j}\leq c'$},\\
1 P_{J} & \text{ if $s_{i_j}\nleq c'$}.
\end{cases}
$$
It is easy to check that $f$ is an isomorphism. 
Since $X_{s_{i_j}P_{J}} \cong \PP^1$ for each $j\in \{1,\dots, r\}$,
we see that $\prod_{j=1}^r X_{s_{i_j}P_{J}}$ is isomorphic to the product of projective spaces, hence nonsingular. 
This finishes the proof of our assertion. 
\end{proof}

\begin{Remark}
Let $w\in W$ and $J\subseteq J'\subseteq \mc{I}_{w}$. If $X_{wB}$ is a horospherical $L_{J}$-variety then $X_{wB}$ is also a horospherical $L_{J'}$-variety. However the following example demonstrates that the converse is not true.
\end{Remark}
\begin{Example} 
In this example, we will use Theorem~\ref{intro:T1} to show that full flag variety $SL(3,k)/B$ is horospherical for the action of $SL(3, k)$ but it is not horospherical variety for the action of $L_{\alpha_1}$ or $L_{\alpha_2}$, where $\{\alpha_1,\alpha_2\}$ is the set of simple roots associated with $B$ in $SL(3,k)$.  
We begin with $L_{\alpha_1}$. 

We notice, by using~\cite[Theorem 1.1]{CanSaha2023} that $SL(3,k)/B$ is a spherical $L_{\alpha_1}$-variety:
Let $J= \{\alpha_1\}$. 
Then the longest element of the Weyl group of $L_{\alpha_1}$ is given by $s_1$. 
Let $w$ denote $s_1s_2s_1$ so that we have $X_{wB}= SL(3,k)/B$.
It follows from the decomposition 
$$
w= \underbrace{s_1}_{w_{0,J}} \underbrace{s_2s_1}_{c}
$$ 
that 1) $X_{wB}$ is a spherical $L_{\alpha_1}$-variety as claimed, 2) the support of $w_{0,J}$ and $c$ is not disjoint. 
Therefore, by Theorem~\ref{intro:T1}, we see that $X_{wB}$ is not a horospherical $L_{\alpha_1}$-variety. 
The proof of the fact that $X_{wB}$ is not a horospherical $L_{\alpha_2}$-variety is similar. 
\end{Example}

\section{Existence of Horospherical Varieties with Prescribed Stabilizers}\label{S:Existence}

In this section, we address the following question:

\medskip

Given a set of simple roots $\mc{J}$, does there exist a horospherical variety $X_{wB}$ such that ${\rm stab}_G(X_{wB}) = P_{\mc{J}}$?

\medskip

When $\mc{J}$ is empty, the situation simplifies. 
We consider the point Schubert variety $X_{{id}B}$, where ${id}$ denotes the identity element. 
This variety has a stabilizer in $G$ that coincides with the Borel subgroup $B$ itself. 
In other words, it corresponds to the standard parabolic subgroup $P_\emptyset$. 
Consequently, if $\mc{J} = \emptyset$, then the answer to our question is affirmative. 
In fact, we will show that, for simple algebraic groups $G$, the answer is always affirmative.

\begin{Theorem}\label{T:prescribed}
Let $G$ be a simple algebraic group. 
Let $\mc{J}$ be a nonempty set of simple roots from $\mc{S}$. 
Then there is a nonsingular Schubert variety $X_{wB} \subseteq G/B$ such that 
\begin{itemize}
	\item [(i)] $P_{\mc{J}}={\rm stab}_{G}(X_{wB});$
	
	\item [(ii)] $X_{wB}$ is a $L_{\mc{J}}$-horospherical variety.
\end{itemize}
\end{Theorem}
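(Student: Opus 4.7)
The plan is to take $w := w_{0,\mc{J}}$, the longest element of the parabolic subgroup $W_\mc{J}$, and to verify directly that the corresponding Schubert variety meets all three requirements. The decomposition $w = w_{0,\mc{J}}\cdot c$ with the trivial Coxeter-type element $c = id$ (empty product of simple reflections) is precisely what allows the horospherical criterion to kick in.

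For condition (ii), Theorem~\ref{intro:T1} applies trivially: $\mathrm{supp}(id) = \emptyset$ is disjoint from $\mathrm{supp}(w_{0,\mc{J}})$, and $\ell(w) = \ell(w_{0,\mc{J}}) + \ell(id)$. Hence $X_{w_{0,\mc{J}} B}$ is an $L_\mc{J}$-horospherical variety. For condition (i), I would compute $\mc{I}_{w_{0,\mc{J}}}$ directly: every $\alpha \in \mc{J}$ maps to a negative root under $w_{0,\mc{J}}$, so $\mc{J} \subseteq \mc{I}_{w_{0,\mc{J}}}$; conversely, for any $\alpha \in \mc{S}\setminus\mc{J}$, the standard induction-on-length argument yields the expansion $w_{0,\mc{J}}(\alpha) = \alpha + \sum_{\beta \in \mc{J}} n_\beta \beta$ with $n_\beta \geq 0$, hence $w_{0,\mc{J}}(\alpha) > 0$ and $\alpha \notin \mc{I}_{w_{0,\mc{J}}}$. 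Thus $\mc{I}_{w_{0,\mc{J}}} = \mc{J}$, giving $\mathrm{stab}_G(X_{wB}) = P_{\mc{I}_w} = P_\mc{J}$.

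For nonsingularity, I would use the identification $X_{w_{0,\mc{J}} B} = P_\mc{J}/B \cong L_\mc{J}/B_{L_\mc{J}}$, which follows from the Bruhat decomposition together with the equality $[id, w_{0,\mc{J}}] = W_\mc{J}$ (a consequence of the subword property applied to any reduced expression of $w_{0,\mc{J}}$). As the full flag variety of the reductive group $L_\mc{J}$, this is smooth. Alternatively, nonsingularity is the $r = 0$ case of Corollary~\ref{nonsingularhoro}. I do not foresee any substantive obstacle; the whole argument is a chain of direct verifications once Theorem~\ref{intro:T1} and the standard description of $\mc{I}_w$ are in hand. If one wishes to illustrate a less trivial example (so that $X_{wB}$ is not merely a Levi flag variety), one can extend the construction by enlarging $c$ to a Coxeter-type element with mutually commuting simple reflections supported in $\mc{S}\setminus\mc{J}$, subject to the constraint that $c^{-1}w_{0,\mc{J}}(\alpha)>0$ for every $\alpha \in \mc{S}\setminus\mc{J}$, and then invoke Corollary~\ref{nonsingularhoro} to retain nonsingularity.
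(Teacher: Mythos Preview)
Your proof is correct, but it takes a different and simpler route than the paper. The paper splits into two cases: when $\mc{J}=\mc{S}$ it takes $w=w_0$ (which is exactly your construction), but when $\mc{J}\subsetneq\mc{S}$ it uses the simplicity of $G$ to pick a simple root $\alpha\in\mc{S}\setminus\mc{J}$ adjacent to some $\beta\in\mc{J}$ in the Dynkin diagram, and sets $w=w_{0,\mc{J}}\,s_{\alpha}$. The adjacency is needed precisely to keep $\alpha$ out of $\mc{I}_w$: if $\alpha$ were orthogonal to all of $\mc{J}$ then $s_\alpha$ would commute with $w_{0,\mc{J}}$ and become an extra left descent, forcing ${\rm stab}_G(X_{wB})\supsetneq P_{\mc{J}}$. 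Your choice $c=id$ sidesteps this issue entirely, never invokes the simplicity hypothesis, and yields the Levi flag variety $L_{\mc{J}}/B_{L_{\mc{J}}}$. What the paper's construction buys is a nondegenerate example: a horospherical Schubert variety strictly larger than $L_{\mc{J}}/B_{L_{\mc{J}}}$, hence not of the rank-zero wonderful type described in Corollary~\ref{C:itissimple}. As you note at the end of your proposal, your construction can be extended in that direction; the paper's $w=w_{0,\mc{J}}\,s_{\alpha}$ is the minimal such extension.
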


\begin{proof}
If $\mc{J}=\mc{S}$, then we take $w=w_{0}$. 
For this choice, we have $P_{\mc{J}}=L_{\mc{J}}=G$ and $X_{wB}=G/B$.
Clearly, $G/B$ is a horospherical $L_{\mc{J}}$-variety. 
Hence, in this case, both of our assertions hold true. 

We proceed with the assumption that $\mc{J} \neq \mc{S}$. 
Since $G$ is simple there is $\alpha$ in $\mc{S}\setminus \mc{J}$ and a $\beta$ in $\mc{J}$ such that $\langle \alpha, \beta \rangle\neq 0$.
Here, $\langle \cdot , \cdot \rangle$ is the $W$-invariant inner product on the Euclidean space $X(T)\otimes_\Z \R$, where $X(T)$ is the character group of $T$.   
We now consider the Weyl group element defined by $w:=w_{0,\mc{J}} s_{\alpha}$. 
Then $\ell(w)=\ell(w_{0,\mc{J}})+1$.
Notice that the left descent set of $w$ is precisely the set of simple reflections, $\{s_\gamma \mid \gamma \in \mc{J} \}$.
Therefore, the stabilizer of $X_{wB}$ is the parabolic subgroup $P_{\mc{J}}$. 
Now since we know that $L_{\mc{J}}$ is the standard Levi subgroup of the stabilizer of $X_{wB}$ in $G$, Theorem~\ref{intro:T1} is applicable to our situation. It follows that $X_{wB}$ is a horospherical $L_{\mc{J}}$-variety. Finally, 
by using Corollary~\ref{nonsingularhoro}, we conclude that $X_{wB}$ is nonsingular. 
This finishes the proof of our theorem.
\end{proof}

\section{Generic Stabilizers are Strongly Solvable Subgroups}\label{S:Generic}

Recall that a subgroup $H \subseteq G$ is called strongly solvable if it is contained in a Borel subgroup $B$ of $G$. 
These subgroups give rise to interesting fiber bundles on the partial flag varieties, $G/H \to G/P$, where $P$ is a parabolic subgroup containing $B$. 
Luna~\cite{Luna1993} and Avdeev~\cite{Avdeev2011} elucidated the significance and importance of such homogeneous spaces for the embedding theory of spherical varieties. 
The purpose of this section is to explore strongly solvable subgroups in the context of Schubert varieties.
In particular, we provide a proof of our second main result.
We restate it here for the convenience of the reader. 

\medskip

Let $X_{wB}$ be a Schubert variety. Let $J\subseteq \mc{I}_w$. 
Then the stabilizer of any point $x$ in $L_{J}$ is a strongly solvable subgroup of $L_{J}$.

\begin{proof}[Proof of Theorem~\ref{intro:T2}]
Let $P$ denote, as before, the stabilizer of $X_{wB}$ in $G$. 
Let $x\in X_{wB}$. 
We set:
$$
H_x:={\rm stab}_{L_{J}}(x)\quad\text{and}\quad Q:={\rm stab}_G(x).
$$
Then $Q$ is a Borel subgroup of $G$.
The following equality of groups is evident:
\begin{align}\label{A:ourinclusion}
H_x = Q \cap L_{J}.
\end{align}
It is also clear that $H_{x}\subseteq \big((P\cap Q)\cdot {\mc {R}}_{u}(P)\big)\cap L_{J}$.
The ambient group $\big((P\cap Q)\cdot {\mc{R}}_{u}(P)\big)\cap L_{J}$ is a Borel subgroup of $L_{J}$. 
This follows from~\cite[Proposition 21.13 (i)]{Borel}.
This shows that $H_x$ is contained a Borel subgroup of $L_{J}$.
Hence, the proof of our assertion is finished. 
\end{proof}

\begin{Remark}\label{R:notclear}
	We maintain the notation of the previous proposition. 
	It is not clear whether $H_x$ is a connected subgroup of the Levi subgroup $L_{J}$ or not.
	It turns out that this question not only depends on the point in question, but also on the type of the ambient semisimple group $G$. 
	We will discuss this question in greater detail in Section~\ref{S:Connectedness}.
\end{Remark}

We now record a consequence of Theorem~\ref{intro:T2}.

\begin{Proposition}\label{P:ssss} Let $w\in W$ and $J\subseteq \mc{I}_w$.
If $X_{wB}$ is a spherical $L_{J}$-variety, then the stabilizer of a point in general position is a strongly solvable spherical subgroup of $L_{J}$. 
\end{Proposition}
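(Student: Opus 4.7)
The plan is to obtain the two conclusions — strong solvability and sphericality — essentially as two immediate consequences, the first from the preceding theorem and the second from the definitions, needing only a small argument to connect the two notions of ``general position.''

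First I would let $x \in X_{wB}$ be a point in general position, and let $H := \mathrm{stab}_{L_{J}}(x)$. By Theorem~\ref{intro:T2} applied directly to $x$, the subgroup $H$ is contained in a Borel subgroup of $L_{J}$, so it is strongly solvable. This disposes of half the statement with no further work.

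For the spherical half, I would argue as follows. Because $X_{wB}$ is irreducible and $L_{J}$-stable, the $L_{J}$-orbit $L_{J}\cdot x$, being of maximal dimension, is open and dense in $X_{wB}$. The hypothesis that $X_{wB}$ is a spherical $L_{J}$-variety means that a Borel subgroup $B_{L_{J}}$ of $L_{J}$ has an open orbit $\mc{O}$ in $X_{wB}$. Two nonempty open subsets of the irreducible variety $X_{wB}$ intersect, so $\mc{O}\cap L_{J}\cdot x\neq \emptyset$. Translating a point of this intersection by an element of $L_{J}$, we obtain an open $B_{L_{J}}$-orbit inside $L_{J}\cdot x \cong L_{J}/H$, which is precisely the statement that $H$ is a spherical subgroup of $L_{J}$.

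Combining the two observations, $H$ is simultaneously strongly solvable and spherical in $L_{J}$, as claimed. I do not anticipate a serious obstacle: the only subtlety is making sure that the notions of ``point in general position'' used in Theorem~\ref{intro:T2} (point on a maximal-dimensional $L_{J}$-orbit) and in the sphericality statement (point on the open $B_{L_{J}}$-orbit) can be aligned, and the irreducibility of $X_{wB}$ together with the density of both open orbits handles this in one line.
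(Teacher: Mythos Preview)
Your proposal is correct and follows essentially the same route as the paper's proof: strong solvability is read off directly from Theorem~\ref{intro:T2}, and sphericality of the stabilizer comes from the open Borel orbit sitting inside the open $L_{J}$-orbit. The only cosmetic difference is the choice of base point: the paper picks $x$ in the open $B_w$-orbit (where $B_w=B\cap L_{J}$) and then observes that $L_{J}\cdot x\cong L_{J}/H_x$ automatically contains that open $B_w$-orbit, whereas you start from a point on a maximal-dimensional $L_{J}$-orbit and argue that the open $B_{L_{J}}$-orbit must meet it; your ``translating'' step is in fact unnecessary, since once $\mc{O}$ meets $L_{J}\cdot x$ it is already contained in $L_{J}\cdot x$ (because $B_{L_{J}}\subset L_{J}$) and is open there.
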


\begin{proof}
Let $B_w$ denote the intersection $B\cap L_{J}$. 
Since $L_{J}$ is the standard Levi factor of ${\rm stab}_G(X_{wB})$, $B_w$ is a Borel subgroup of $L_{J}$. 
Let $x$ be a point from the open $B_w$-orbit in $X_{wB}$. 
Let $H_x$ denote the stabilizer of $x$ in $L_{J}$. 
Then the $L_{J}$-orbit of $x$, which is isomorphic to $L_{J} / H_x$, is a spherical homogeneous space. 
In particular, the Borel subgroup $B_w$ has only finitely many orbits in $L_{J}/H_x$, showing that $H_x$ is a spherical subgroup. 
The rest of our assertion follows from Theorem~\ref{intro:T2}.
\end{proof}

\begin{Remark}
In their work~\cite{GandiniPezzini}, Gandini and Pezzini introduced a combinatorial model for the set of Borel orbits in a strongly solvable spherical homogeneous space $G/H$, employing weight polytopes. It would be interesting to apply their work in the context of generic stabilizers of Schubert varieties. 
\end{Remark}

\section{Closed Orbits}\label{S:Closed}

A nonsingular complete $G$-variety $X$ is called a {\em wonderful variety} if it satisfies the following properties:
\begin{enumerate}
    \item $X$ contains a dense $G$-orbit $X_0$ whose complement is the union of a finite number of nonsingular prime divisors $D_i$ $(i \in I)$, each of which is $G$-invariant.
    \item The $D_i$ are nonsingular and have normal crossings, meaning that every non-empty intersection $D_{i_1} \cap \cdots \cap D_{i_k}$ is nonsingular and of codimension $k$.
    \item The $G$-orbit closures in $X$ are precisely the non-empty intersections $D_{i_1} \cap \cdots \cap D_{i_k}$.
\end{enumerate}
In this case, a subgroup $H\subset G$ such that $G/H \cong X_0$ is called a {\em wonderful subgroup},
and $X$ is said to be the {\em wonderful embedding of $G/H$}.
By an important result of Luna from~\cite{Luna1996}, we know that every wonderful variety is a spherical variety.
\medskip

The {\em rank} of a wonderful variety can be introduced as follows.  
Let $X$ be a wonderful variety as we defined above. 
Then $X$ has a unique closed $G$-orbit, which is given by $\bigcap_{i\in I} D_i$. 
This closed orbit is necessarily a partial flag variety since $X$ is complete.
The number of boundary divisors, $r:= | I |$, is equal to the codimension of the closed orbit.
This common number is called the {\em rank} of $X$.
 
\medskip

Let $G/H$ be a spherical homogeneous space. 
Let $\mc{D}$ denote the set of colors of $G/H$. 
Hence $\mc{D}$ is the $B$-stable divisors of $G/H$. 
The normalizer of $H$ in $G$ has a natural action on $\mc{D}$. 
The kernel of this action of $N_G(H)$ on $\mc{D}$ is called the {\em spherical closure} of $H$. 
We denote the spherical closure of $H$ by $\overline{H}$.
The spherical subgroup $H$ is called {\em spherically closed} if $\overline{H}=H$ holds. 
Recall that a spherical subgroup $H$ is called wonderful if $G/H$ admits a wonderful embedding. 
Equivalently, by a result of Knop, $H$ is wonderful if and only if the lattice of weights of $B$-semiinvariant rational functions on $G/H$ is generated by the spherical roots of $G/H$. 
An important result of Avdeev, in the context of strongly solvable spherical subgroups states that  
$H$ is a wonderful subgroup if and only if it is spherically closed.
\medskip

Let $w\in W$ and $J\subseteq \mc{I}_w$. Let $X_{wB}$ be a horospherical $L_{J}$-variety.
Let $H_x$ denote the generic stabilizer of a horospherical $L_{J}$-variety $X_{wB}$. 
Then $H_x$ is a horospherical subgroup of $L_{J}$.
By~\cite[Satz 2.1]{Knop1990}, we know that the normalizer $N_{L_{J}}(H_x)$ is a parabolic subgroup containing $H_x$. 
Since $H_x$ is also a strongly solvable subgroup of $L_{J}$, we see that $N_{L_{J}}(H_x)$ is the Borel subgroup 
of $L_{J}$.
Hence, we conclude that the quotient group $N_{L_{J}}(H_x)/ H_x$ is a torus.

\begin{Remark}
In general, a spherical subgroup $H\subseteq G$ is called {\em sober} if the group $N_G(H)/H$ is finite. 
Notice that, if a sober subgroup $H$ contains a maximal unipotent subgroup, then it must contain its normalizer as well. 
This means that a sober subgroup is horospherical if and only if it is a parabolic subgroup. 
By a result of Brion and Pauer~\cite{BrionPauer}, wonderful subgroups are known to be sober.
Hence, the only wonderful horospherical subgroups are the parabolic subgroups. 
In this case, the rank of the wonderful embedding is necessarily 0.
\end{Remark}

Returning to the generic stabilizers of horospherical Schubert varieties, we know that if a generic stabilizer subgroup $H_x \subset {L_{J}}$ is a non-parabolic horospherical subgroup, then it is not sober.
In particular, by our previous remark, we know that $H_x$ is not a wonderful subgroup. 
Thus, within the scope of our discussion about generic stabilizers, the following key problems should be addressed:
\begin{enumerate}
\item Identify the elements $w$ of $W$ for which the generic stabilizer of $X_{wB}$ in $L_{\mathcal{I}_w}$ is a wonderful subgroup.
\item For $w \in W$ as identified in part (1), describe the structure of the corresponding wonderful variety $X_{wB}$.
\end{enumerate}
We will provide a complete solution to both of these questions.

\begin{Definition}
A $G$-variety $X$ is called a {\it simple $G$-variety} if it has a unique closed $G$-orbit.
\end{Definition}

\begin{Theorem}\label{T:itissimple}
Let $w\in W$ and $J\subseteq \mc{I}_{w}$. Let $L_J$ denote the corresponding standard Levi subgroup.
Then 
$X_{wB}$ is a simple $L_{J}$-variety if and only if $w=w_{0,J}$. In particular $J=\mc{I}_w$.
\end{Theorem}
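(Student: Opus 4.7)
The plan is to parametrize the closed $L_J$-orbits in $X_{wB}$ by $W_J$-orbits on the set of $T$-fixed points $(X_{wB})^T = \{\dot{v}B/B : v \leq w\}$, and then to count them. Since $J \subseteq \mc{I}_w$ forces $P_J \subseteq {\rm stab}_G(X_{wB})$, the Levi subgroup $L_J$ acts on $X_{wB}$, and $W_J$ permutes $(X_{wB})^T$ via lifts in $N_{L_J}(T)$. Any closed $L_J$-orbit is a complete $T$-variety, so by the Borel fixed-point theorem it contains a $T$-fixed point; hence every closed $L_J$-orbit has the form $L_J \cdot \dot{v}B/B$ for some $v \leq w$.

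The key technical step is to check that for each $v \in W$ the stabilizer $L_J \cap \dot{v}B\dot{v}^{-1}$ is a Borel subgroup of $L_J$. The Borel $\dot{v}B\dot{v}^{-1}$ corresponds to the positive system $v\mc{R}^+$ of $\mc{R}$. Letting $\mc{R}_J$ denote the root system of $L_J$, the set $\mc{R}_J \cap v\mc{R}^+$ contains exactly one of $\{\alpha,-\alpha\}$ for each $\alpha \in \mc{R}_J$ and is closed under root addition (as an intersection of two closed subsets of $\mc{R}$), so it is a positive system of $\mc{R}_J$. Consequently $L_J \cap \dot{v}B\dot{v}^{-1} = T \cdot \prod_{\alpha \in \mc{R}_J \cap v\mc{R}^+} U_\alpha$ is the corresponding Borel, so $L_J \cdot \dot{v}B/B \cong L_J / (L_J \cap \dot{v}B\dot{v}^{-1})$ is complete and therefore closed in $X_{wB}$.

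Next I would show that two $T$-fixed points $\dot{v}B/B$ and $\dot{v}'B/B$ lie in the same $L_J$-orbit if and only if $v' \in W_J \cdot v$. The inclusion $W_J \cdot v \subseteq L_J \cdot \dot{v}B/B$ is immediate by lifting $W_J$ to $N_{L_J}(T)$. For the reverse, if $g\dot{v}B = \dot{v}'B$ with $g \in L_J$, then $g^{-1}Tg \subseteq L_J \cap \dot{v}B\dot{v}^{-1}$, so $g^{-1}Tg$ is a maximal torus of this Borel. Conjugating it back to $T$ by a suitable element $h$ of the same Borel adjusts $g$ into $gh \in N_{L_J}(T)$ without altering the coset $g\dot{v}B$ (since $h \in \dot{v}B\dot{v}^{-1}$ fixes $\dot{v}B/B$), which yields $v' \in W_J \cdot v$.

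Putting the pieces together, the closed $L_J$-orbits in $X_{wB}$ biject with $W_J$-orbits on $[id, w]$, and $[id, w]$ is $W_J$-stable because $P_J$ preserves $X_{wB}$. The orbit of $id$ is $W_J = [id, w_{0,J}]$, and simplicity is equivalent to the existence of only this single orbit, hence to $[id, w] = [id, w_{0,J}]$, i.e.\ $w = w_{0,J}$; the equality $J = \mc{I}_{w_{0,J}}$ is then automatic. I expect the main obstacle to be the converse direction of the $W_J$-orbit bijection just described, which depends on the maximal-torus conjugacy argument inside the solvable Borel $L_J \cap \dot{v}B\dot{v}^{-1}$; the remaining steps are routine once that ingredient is in place.
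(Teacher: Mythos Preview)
Your argument is correct and takes a genuinely different route from the paper's proof.

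The paper proceeds more directly: it first writes $w=w_{0,J}v$ with $\ell(w)=\ell(w_{0,J})+\ell(v)$ (so that $v$ is a minimal $W_J$-coset representative), and then exhibits \emph{two specific} closed $L_J$-orbits, namely $L_J\cdot B/B$ and $L_J\cdot \dot vB/B$. For the latter it verifies by hand, using $v^{-1}(\alpha)>0$ for all $\alpha\in J$, that the stabilizer is exactly the standard Borel $B_J$, so the orbit is complete and hence closed. Simplicity then forces $\dot vB/B\in X_{w_{0,J}B}$, i.e.\ $v\le w_{0,J}$, and the minimal-coset condition pins down $v=id$.

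Your approach is more structural: you show that the stabilizer of \emph{every} $T$-fixed point is a Borel of $L_J$ (via the positive system $\mc{R}_J\cap v\mc{R}^+$), and that two $T$-fixed points lie in the same $L_J$-orbit exactly when they are $W_J$-translates. This yields a clean bijection between closed $L_J$-orbits and $W_J$-orbits on $[id,w]$, from which the theorem drops out. In effect you are proving the paper's later Proposition~\ref{P:noofclosed} (the count of closed $L_J$-orbits) first and deducing the theorem as a special case; the paper does the opposite, proving the simple case by an ad~hoc argument and then redoing essentially the same computation for the general count. Your maximal-torus conjugacy step inside the solvable group $L_J\cap \dot vB\dot v^{-1}$ is fine; it is the standard argument that $T$-fixed points in a flag variety of $L_J$ are exactly the $W_J$-translates of the base point.
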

\begin{proof}
($\Rightarrow$) 
Let $Y$ denote the unique closed $L_J$-orbit in $X_{wB}$. 
The ``origin'' of the Schubert variety $X_{wB}$ is given by the point $B/B$. 
Let $B_J$ denote $L_J \cap B$. 
The $L_J$-orbit of $B/B$ is isomorphic to the flag variety $L_J/B_J$. 
Indeed, we have 
$$
L_J B/B \cong L_J/ L_J \cap B = L_J/B_J.
$$
Since $L_J/B_J$ is a complete variety, it is closed in $X_{wB}$.
In other words, $L_J\cdot B/B$ is the unique closed $L_J$-orbit, $Y$.  
At the same time, since $L_J$ acts transitively on $L_J/B_J$, and since $w_{0,J}$ is the maximal element of the Weyl group of $L_J$, we have $L_J\dot{w}_{0,J}B_J/B_J=L_J/B_J$. 
This means that $L_J \cdot B/B = L_J \cdot \dot{w}_{0,J}B/B$, implying the relation $w_{0,J}\leq w$ in $W$. 
Hence, we see that 
$$
w=w_{0,J}v,
$$ 
where $v$ is an element of $W$ such that $\ell(w)=\ell(w_{0,J})+\ell(v)$.
Note that $v$ cannot be a nontrivial element of the Weyl group of $L_J$. 
Otherwise the lengths would not add up to $\ell(w)$. 
We claim that $v= id$. 
To show this, let $H:={\rm stab}_{L_{J}}(\dot{v}B/B)$. 
Then, clearly, we have the inclusion $T\subset H$. Moreover, since $\ell(w)=\ell(w_{0,J})+\ell(v)$, it follows that $U_{\alpha}\dot{v}B/B=\dot{v}B/B$ for all $\alpha\in J$. 
Note that $U_{-\beta}\dot{u}B/B\neq\dot{u}B/B$ for all roots $\beta$ associated to $B_{J}$.
Hence, $H$ is the Borel subgroup $H=B_{J} \subset L_J$. 
In other words, we have an isomorphism, $L_J \cdot \dot{v}B/B \cong L_J/B_J$, implying the existence of the relation $v \leq w_{0,J}$. 
Hence, it follows from our previous discussion that $v= id$. 
In other words, we have $w = w_{0,J}$. 
This finishes the proof of the ``only if'' part of our assertion.
	
($\Leftarrow$) Assume that $w= w_{0,J}$. We already observed in the proof of the previous implication that $X_{w_{0,J}B} \cong L_J/B_J$, where $B_J = B\cap L_J$. 
Since $L_J/B_J$ is a simple $L_J$-variety, the proof of our lemma is finished. 
\end{proof}

Notice that Theorem~\ref{T:itissimple} answers the first question we posed above. 
In fact, it answers the second question as well.  

\begin{Corollary}\label{C:itissimple}
Let $w\in W$ and $J\subseteq \mc{I}_w$. 
The Schubert variety $X_{wB}$ is a wonderful $L_J$-variety if and only if $w = w_{0,J}$. In this case, $X_{wB}$ is isomorphic to the full flag variety of $L_{\mc{I}_w}$, which is a wonderful variety of rank 0. 
\end{Corollary}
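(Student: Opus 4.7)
The plan is to derive this corollary almost immediately from Theorem~\ref{T:itissimple} together with the definition of a wonderful variety. The key observation is that a wonderful variety is by definition simple: the condition that the $G$-orbit closures are exactly the nonempty intersections $D_{i_1} \cap \cdots \cap D_{i_k}$ of the boundary divisors forces the intersection of all the $D_i$'s to be the unique closed orbit. So wonderful implies simple, and this is where Theorem~\ref{T:itissimple} does all the work.

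First I would handle the forward implication. Assuming $X_{wB}$ is a wonderful $L_J$-variety, the preceding remark gives that $X_{wB}$ is simple as an $L_J$-variety. Then Theorem~\ref{T:itissimple} yields $w = w_{0,J}$ and in particular $J = \mc{I}_w$. For the reverse implication, I would take $w = w_{0,J}$. The proof of Theorem~\ref{T:itissimple} already established the isomorphism $X_{w_{0,J}B} \cong L_J/B_J$, where $B_J = B \cap L_J$ is a Borel subgroup of $L_J$. Since $w = w_{0,J}$ forces $\mc{I}_w = J$, this is the full flag variety of $L_{\mc{I}_w}$, which I would then verify satisfies the three axioms of a wonderful variety trivially: the flag variety $L_J/B_J$ is $L_J$-homogeneous, so the dense $L_J$-orbit is all of $X_{wB}$, the index set $I$ of boundary divisors is empty, and the two conditions on $\{D_i\}_{i \in I}$ and the orbit closures are vacuously satisfied. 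The rank of the wonderful embedding is then $|I| = 0$, which matches the codimension of the unique (and only) closed orbit.

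There is no real obstacle here; the entire substance of the argument was already carried out in Theorem~\ref{T:itissimple}. The only minor points to verify are the bookkeeping identity $\mc{I}_{w_{0,J}} = J$ (which follows from the fact that the left descent set of $w_{0,J}$ consists exactly of the simple reflections indexed by $J$, since $w_{0,J}$ sends every positive root supported on $J$ to a negative root) and the observation that a homogeneous complete $G$-variety is automatically a wonderful variety of rank $0$. Once these are noted, the corollary follows in a few lines.
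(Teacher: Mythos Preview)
Your proposal is correct and follows essentially the same route as the paper: wonderful implies simple, so Theorem~\ref{T:itissimple} forces $w=w_{0,J}$; conversely $X_{w_{0,J}B}\cong L_J/B_J$ is a flag variety, hence wonderful of rank~$0$. The paper's proof is terser (it simply asserts that every flag variety is wonderful rather than checking the axioms), but the argument is the same.
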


\begin{proof}
Let us assume that $X_{wB}$ is a wonderful $L_J$-variety. Then $X_{wB}$ is a simple $L_J$-variety. 
Hence, by Theorem~\ref{T:itissimple}, we have $w= w_{0,J}$. 
Conversely, if $w= w_{0,J}$, then $X_{wB}= L_J/B_J$. 
But every flag variety is a wonderful variety. 
This finishes the proof of our assertion.
\end{proof}

\medskip

For $J\subseteq S$, let $L_J$ be the standard Levi subgroup of the standard parabolic subgroup $P_{J}$ of $G$ associated to $J$. 
Every closed $L_J$-orbit in $G/B$ contains at least one $T$-fixed point.
Since the number of closed $T$-orbits in $G/B$ are indexed by the elements of the Bruhat-Chevalley poset on $W$, 
we see that the number of closed $L_J$-orbits in $G/B$ will always be less than or equal to the cardinality $|W|$. 
In other words, there are only finitely many closed $L_J$-orbits in $G/B$. 
Before discussing a more precise generalization of this observation we look at two extreme examples.

\begin{Example}
Let $L_J$ be the smallest standard Levi subgroup of $G$, that is, the maximal torus $T$. 
Evidently, the closed $T$-orbits in $G/B$ are its fixed points, 
$$
\dot{w}B/B\qquad (w\in W).
$$
Although $T$ has only finitely many closed orbits, it has infinitely many orbits. 
\end{Example}

\begin{Example}
Let $p$ and $q$ be two positive integers.
Let $n$ denote $p+q$. 
Let $L_{p,q}$ denote the Levi subgroup $L_{p,q}:=GL(p,\C)\times GL(q,\C)$ of $GL(n,\C)$.
Let $B$ denote the Borel subgroup of all upper triangular matrices in $GL(n,\C)$. 
The left $L_{p,q}$-orbits in $GL(n,\C)/B$ are in one-to-one correspondence with the right $B$-orbits in $L_{p,q}\backslash GL(n,\C)$.
It is well-known that $L_{p,q}$ is a spherical subgroup.
Therefore, the number of $B$-orbits in $L_{p,q}\backslash GL(n,\C)$ is finite.
Equivalently, the number of $L_{p,q}$-orbits in $GL(n,\C)/B$ is finite. 
Let $S_n$ denote the symmetric group.
Then the Weyl group of $L_{p,q}$ is the parabolic subgroup $S_p\times S_q$ of $S_n$.
As a corollary of our next result, we will see that the number closed $L_{p,q}$-orbits in $GL(n,\C)/B$ is given by 
the number of minimal length right coset representatives of $S_p\times S_q$ in $S_n$. 
\end{Example}

Let $W_{J}$ denote the Weyl group of $L_{J}$. Let $^{J}W$ denote the set of all minimal right coset representatives of $W_{J}\backslash W$ in $W$.
\begin{Proposition}\label{P:noofclosed}
Let $w\in W$ and $J\subseteq \mc{I}_w$. If $v$ is the element of $W$ such that $w= w_{0,J}v$ and $\ell(w) = \ell(w_{0,J}) + \ell(v)$, 
then the number of closed $L_{J}$-orbits in $X_{wB}$ is given by the cardinality, 
\begin{align*}
|\big\{u\le v : \ell(w_{0, J}u)=\ell(w_{0,J})+\ell(u))\big\}|.
\end{align*}
\end{Proposition}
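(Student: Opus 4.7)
The plan is to enumerate the closed $L_J$-orbits in $X_{wB}$ through the $T$-fixed locus and the set ${}^JW$ of minimal left coset representatives. Since $L_J$ is reductive and $X_{wB}$ is projective, every closed $L_J$-orbit is itself projective and therefore contains at least one $T$-fixed point. The $T$-fixed points of $G/B$ are $\dot{u}B/B$ for $u\in W$, and two such points share an $L_J$-orbit if and only if they lie in the same left coset $W_Ju$; consequently the closed $L_J$-orbits meeting the $T$-fixed locus of $G/B$ are naturally indexed by a subset of ${}^JW$.

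Next I would verify that for every $u\in {}^JW$ the orbit $L_J\cdot\dot{u}B/B$ is automatically closed in $G/B$. The stabilizer $L_J\cap\dot{u}B\dot{u}^{-1}$ is generated over $T$ by those root subgroups $U_\gamma$ with $\gamma$ a root of $L_J$ and $u^{-1}\gamma>0$. For a positive root $\gamma$ of $L_J$, writing $\gamma=\sum_{\alpha\in J}n_\alpha\alpha$ with $n_\alpha\geq 0$ and using that $u^{-1}\alpha>0$ for every $\alpha\in J$ (the defining property of ${}^JW$), $u^{-1}\gamma$ is a root written as a nonnegative combination of positive roots, hence positive; by the analogous argument $u^{-1}\gamma<0$ for $\gamma$ a negative root of $L_J$. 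Therefore $L_J\cap\dot{u}B\dot{u}^{-1}=B_J:=L_J\cap B$, so $L_J\cdot\dot{u}B/B\cong L_J/B_J$ is a complete flag variety of $L_J$, and in particular closed in $G/B$. Thus the closed $L_J$-orbits in $G/B$ are in bijection with ${}^JW$, and a given one $L_J\cdot\dot{u}B/B$ is contained in $X_{wB}$ iff all of its $T$-fixed points $\dot{w_Ju}B/B$ ($w_J\in W_J$) lie in $X_{wB}$, equivalently iff the maximum $w_{0,J}u$ of the coset $W_Ju$ satisfies $w_{0,J}u\leqslant w=w_{0,J}v$.

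The decisive step is then the equivalence $w_{0,J}u\leqslant w_{0,J}v\iff u\leqslant v$ for $u,v\in {}^JW$. For the direction $\Leftarrow$, concatenating a reduced expression of $w_{0,J}$ with one of $v$ gives a reduced expression of $w_{0,J}v$ by the length-additivity hypothesis; extracting from within $v$'s portion a reduced subword representing $u$ (available because $u\leqslant v$) and prepending the full reduced expression of $w_{0,J}$ produces a subword of length $\ell(w_{0,J})+\ell(u)=\ell(w_{0,J}u)$ that represents $w_{0,J}u$, hence is reduced, and the Subword Property yields $w_{0,J}u\leqslant w_{0,J}v$. For the direction $\Rightarrow$, choose a reduced subword of a reduced expression of $w_{0,J}v$ representing $w_{0,J}u$ and split it as $xy$ with $x$ taken from $w_{0,J}$'s part (so $x\in W_J$) and $y$ from $v$'s part (so $y\leqslant v$); since the minimum-length projection $\pi^J:W\to {}^JW$ is order-preserving and satisfies $\pi^J(xy)=\pi^J(y)$ for $x\in W_J$, one obtains $u=\pi^J(w_{0,J}u)=\pi^J(xy)=\pi^J(y)\leqslant \pi^J(v)=v$.

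Combining these steps, the number of closed $L_J$-orbits in $X_{wB}$ equals $|\{u\in {}^JW:u\leqslant v\}|$, and since the condition $u\in {}^JW$ is equivalent to $\ell(w_{0,J}u)=\ell(w_{0,J})+\ell(u)$, this matches the cardinality in the statement. The main technical obstacle is the $\Rightarrow$ direction of the equivalence above, since ${}^JW$ is not downward closed in the Bruhat order, so one cannot just read off an element of ${}^JW$ from an arbitrary subword of $v$; the argument must route through the unique length-additive $W_J\cdot {}^JW$ factorization together with the order-preservation of $\pi^J$.
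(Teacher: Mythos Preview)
Your proposal is correct and follows essentially the same route as the paper: enumerate closed $L_J$-orbits via their $T$-fixed points, compute that ${\rm stab}_{L_J}(\dot{u}B/B)=B_J$ for $u\in {}^JW$ so that each such orbit is a complete flag variety of $L_J$, and then match the containment in $X_{wB}$ with the Bruhat condition. Your argument is in fact more careful than the paper's, which does not spell out the equivalence $w_{0,J}u\leqslant w_{0,J}v\iff u\leqslant v$ for $u,v\in {}^JW$ nor the injectivity of the parametrization by cosets.
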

\begin{proof}
The proof is similar in part to the ``if'' direction of the proof of Theorem~\ref{T:itissimple}. 
We will show that the closed $L_J$-orbits in $X_{wB}$ are parametrized by the elements $u\in W$ satisfying 
the properties:
1) $u\leq v$, and 2) $\ell(w_{0,J}u)=\ell(w_{0,J})+\ell(u)$.
Evidently, such an element $u\in W$ cannot be a nontrivial element of $W_{J}$. 
Otherwise, we find a contradiction to the equality $\ell(w_{0,J}u)=\ell(w_{0,J})+\ell(u)$. 
Let $H:={\rm stab}_{L_{J}}(\dot{u}B/B)$. 
Clearly, we have the inclusion $T\subset H$. 
Also, since $\ell(w_{0,J}u)=\ell(w_{0,J})+\ell(u)$, we know that $U_{\alpha}\dot{u}B/B=\dot{u}B/B$ for all $\alpha\in J$.
Note that $U_{-\beta}\dot{u}B/B\neq\dot{u}B/B$ for all roots $\beta$ associated to $B_{J}=B\cap L_{J}$.
Hence, $H$ is the Borel subgroup $H=B_{J} \subset L_J$. 
In other words, we have an isomorphism, $L_J \cdot \dot{u}B/B \cong L_J/B_J$.
Since $L_J/B_J$ is complete, the orbit $L_J \cdot \dot{u}B/B$ is a closed orbit.
Conversely, if $Y$ is a closed $L_J$-orbit in $X_{wB}$, then we can assume that $Y$ is the $L_J$-orbit of a $T$-fixed point since every closed $L_J$-orbit contains a $T$-fixed point. 
The rest of the proof follows from the argument we used for the first part. 
\end{proof}

\begin{Corollary}
The number of closed $L_J$-orbits in $G/B$ is given by the cardinality of $^{J}W$. 
\end{Corollary}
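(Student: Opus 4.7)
The plan is to deduce the corollary from Proposition~\ref{P:noofclosed} by taking $w = w_{0}$, the longest element of $W$. Since $w_{0}$ sends every positive root to a negative one, we have $\mc{R}^{+}(w_{0}^{-1}) = \mc{R}^{+}$, whence $\mc{I}_{w_{0}} = \mc{S}$. In particular, every $J \subseteq \mc{S}$ satisfies $J \subseteq \mc{I}_{w_{0}}$, so the hypothesis of the proposition is met. Since $G/B = X_{w_{0}B}$, counting closed $L_{J}$-orbits in $G/B$ amounts to counting them in $X_{w_{0}B}$.

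Next I would write $w_{0} = w_{0,J}\, v$ with $\ell(w_{0}) = \ell(w_{0,J}) + \ell(v)$; here $v = w_{0,J}^{-1} w_{0}$ is the unique longest element of ${}^{J}W$. Proposition~\ref{P:noofclosed} then identifies the number of closed $L_{J}$-orbits in $G/B$ with
$$
\big| \{ u \leqslant v : \ell(w_{0,J}\, u) = \ell(w_{0,J}) + \ell(u) \} \big|.
$$
The length identity $\ell(w_{0,J}\, u) = \ell(w_{0,J}) + \ell(u)$ is the standard criterion for $u$ to be the minimal length representative of the right coset $W_{J} u$; equivalently, $u \in {}^{J} W$. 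So the set to be counted is $\{u \in {}^{J} W : u \leqslant v\}$.

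The remaining task, and the only place where a small argument is genuinely needed, is to verify that every $u \in {}^{J} W$ satisfies $u \leqslant v$. For this I would fix a reduced expression of $w_{0}$ adapted to the factorization $w_{0} = w_{0,J}\, v$, namely a concatenation of a reduced expression of $w_{0,J}$ with one of $v$, and then invoke the subword property of the Bruhat-Chevalley order. Any $u \leqslant w_{0}$ is a subword of this fixed expression; the sub-subword coming from the $w_{0,J}$-block represents an element of $W_{J}$, and by the uniqueness of minimal coset representatives this sub-subword must be trivial whenever $u \in {}^{J} W$. Consequently $u$ is a subword of the $v$-block, giving $u \leqslant v$. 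This Bruhat-maximality of $v = w_{0,J}^{-1} w_{0}$ in ${}^{J}W$ delivers the desired bijection with ${}^{J}W$; the remainder of the proof is pure assembly from Proposition~\ref{P:noofclosed}.
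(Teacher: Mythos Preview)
Your proof is correct and follows essentially the same route as the paper: apply Proposition~\ref{P:noofclosed} with $w=w_{0}$, identify $v$ with the longest element ${}^{J}w_{0}$ of ${}^{J}W$, and observe that the resulting set is exactly ${}^{J}W$. The paper simply asserts this last identification, whereas you spell out why every $u\in{}^{J}W$ lies below $v$; your justification ``by uniqueness of minimal coset representatives'' would be cleaner phrased via the left-descent characterization of ${}^{J}W$ (a nontrivial $a\in W_{J}$ in the factorization $u=ab$ would force a left descent of $u$ in $J$), but the argument is sound.
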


\begin{proof}
The maximal element of $W$, that is, $w_0$ has a decomposition of the form $w_0= w_{0,J}{^{J}w_0}$, where $^{J}w_0$ is the maximal element of $^{J}W$. 
Proposition~\ref{P:noofclosed} implies that the number of closed $L_J$-orbits in $X_{w_0B}$ is given by the cardinality of the set 
$\big\{u\le {^{J}w_0}: \ell(w_{0, J}u)=\ell(w_{0,J})+\ell(u))\big\}$, which is exactly the set $^{J}W$. 
Since the Schubert variety $X_{w_0B}$ is $G/B$, our assertion follows.
\end{proof}

\section{The Connectedness Question in Detail}\label{S:Connectedness}

As we promised in Remark~\ref{R:notclear}, the purpose of this section is to discuss the connectedness of the stabilizer group of a point in general position in a Schubert variety. 
We begin with a preliminary observation on the joint kernels of the roots that arise from reduced expressions.

\begin{Lemma}\label{lemma A}
	Let $w=s_{i_1}s_{i_2}\cdots s_{i_r}$ be a reduced expression for an element $w\in W$. 
	For $j\in \{1,\dots, r\}$, let $\beta_j$ denote the root defined by $\beta_j :=s_{i_1}s_{i_2}\cdots s_{i_{j-1}}(\alpha_{i_{j}})$. 
	Then we have 
	\[
	\bigcap\limits_{j=1}^{r}\ker \beta_{j}= \bigcap\limits_{j=1}^{r}\ker \alpha_{i_{j}}=\bigcap\limits_{\alpha\in {\rm supp}(w)}\ker \alpha.
	\]
\end{Lemma}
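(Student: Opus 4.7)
The second equality in the statement is a consequence of the word property for Coxeter groups: the multiset $\{\alpha_{i_1},\ldots,\alpha_{i_r}\}$ of simple roots arising from a reduced expression of $w$ has underlying set $\{\alpha : s_{\alpha} \in \mathrm{supp}(w)\}$, so the two intersections coincide. The substance of the lemma is therefore the first equality, $\bigcap_{j}\ker\beta_{j}=\bigcap_{j}\ker\alpha_{i_{j}}$, and my plan is to reduce it to the following key observation, which I would prove by a short induction on $j$:
\[
\beta_{j} \;-\; \alpha_{i_{j}} \;\in\; \sum_{k<j}\Z\,\alpha_{i_{k}} \qquad \text{for each } j=1,\ldots,r.
\]
The base case $j=1$ is immediate since $\beta_{1}=\alpha_{i_{1}}$. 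For the inductive step one exploits the identity $s_{i}(\gamma)-\gamma\in\Z\alpha_{i}$, valid for every simple reflection $s_{i}$ and every weight $\gamma$; applying $s_{i_{j-1}},\,s_{i_{j-2}},\,\ldots,\,s_{i_{1}}$ in succession to $\alpha_{i_{j}}$ modifies the current vector only by integer multiples of $\alpha_{i_{j-1}},\,\alpha_{i_{j-2}},\,\ldots,\,\alpha_{i_{1}}$ respectively, which yields the claim.

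From this observation both inclusions of the first equality follow easily. For $\bigcap_{j}\ker\alpha_{i_{j}}\subseteq\bigcap_{j}\ker\beta_{j}$: if $t\in T$ satisfies $\alpha_{i_{k}}(t)=1$ for every $k$, then by multiplicativity of characters we have $\beta_{j}(t) = \alpha_{i_{j}}(t)\cdot(\beta_{j}-\alpha_{i_{j}})(t) = 1\cdot 1 = 1$ for each $j$. For the reverse inclusion I would induct on $j$: the base case $\beta_{1}=\alpha_{i_{1}}$ is trivial, and in the inductive step the hypothesis $\alpha_{i_{k}}(t)=1$ for $k<j$ combined with the decomposition gives $(\beta_{j}-\alpha_{i_{j}})(t) = 1$, so $\beta_{j}(t)=1$ forces $\alpha_{i_{j}}(t)=1$.

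The argument is essentially routine; the only mild point of care is that a simple reflection can repeat within the reduced expression of $w$ (for example, $w_{0}=s_{1}s_{2}s_{1}$ in type $A_{2}$), which renders the decomposition $\beta_{j}\in\alpha_{i_{j}}+\sum_{k<j}\Z\,\alpha_{i_{k}}$ non-unique. This is harmless, however, since only the containment itself is invoked, and it holds regardless of any ambiguity in the coefficients.
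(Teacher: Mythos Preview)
Your proposal is correct and follows essentially the same approach as the paper: both arguments rest on the observation that $\beta_{j}-\alpha_{i_{j}}$ lies in the $\Z$-span of $\alpha_{i_{1}},\ldots,\alpha_{i_{j-1}}$, and both derive the nontrivial inclusion $\bigcap_{j}\ker\beta_{j}\subseteq\bigcap_{j}\ker\alpha_{i_{j}}$ by induction on $j$. Your write-up is in fact slightly cleaner, as you isolate the key containment once and also explicitly justify the second equality and the harmlessness of repeated simple reflections, points the paper leaves implicit.
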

\begin{proof}
	It is evident that the inclusion $\bigcap\limits_{j=1}^{r}\ker \alpha_{i_{j}}\subseteq \bigcap\limits_{j=1}^{r}\ker \beta_{j}$ holds. 
	It is also evident that if $t\in \bigcap\limits_{j=1}^{r}\ker \beta_{j}$, then we have $\beta_{j}(t)=1$ for all $j\in \{1,\dots,r\}$. 
	In this case, the equality $\beta_{1}=\alpha_{i_{1}}$ implies that $\alpha_{i_{1}}(t)=1$. 
	Next, we check the character value $\alpha_{i_2}(t)$ under the same assumption.
	Since $\beta_{2}=\alpha_{i_{2}}-\langle\alpha_{i_{2}}, \alpha_{i_{1}} \rangle\alpha_{i_{1}}$, $\alpha_{i_{1}}(t)=1$, and $\beta_{2}(t)=1$, we see that 
	$$
	\alpha_{i_{2}}(t) = 1.
	$$
	
	Next, we focus on $\beta_3$. 
	Notice that $\beta_{3}=\alpha_{i_{3}}+\sum a_{1}\alpha_{i_{1}}+a_{2} \alpha_{i_{2}}$ for some non-negative integers $a_{1}$ and $a_{2}$.
	Since $\alpha_{i_{1}}(t)=1,$ $\alpha_{i_{2}}(t)=1$, and $\beta_{2}(t)=1$, by using an argument that is similar to the one that we used in the previous case, we see that $\alpha_{i_{3}}(t)=1$.
	Proceeding along similar lines, we conclude that $\alpha_{i_{j}}(t)=1$ for every $j\in \{1,\dots, r\}$.
	In other words, we have $t\in \bigcap\limits_{j=1}^{r}\ker \alpha_{i_{j}}$. 
	Hence. the equality 
	\[
	\bigcap\limits_{j=1}^{r}\ker \beta_{j}=\bigcap\limits_{j=1}^{r}\ker \alpha_{i_{j}}
	\]
	follows. 
\end{proof}

In our next lemma, we assume that $G$ is centerless.
We will explain by examples afterwards why this specialization is needed.

\begin{Lemma}\label{lemma B}
	Let $G$ be a complex semi-simple group of adjoint type. 
	Let $\{\alpha_{i_{1}},\ldots, \alpha_{i_{r}}\}$ be a subset of the set of simple roots.  
	Then the intersection 
	\[
	\bigcap_{j=1}^{r}\ker \alpha_{i_{j}}
	\] 
	is a torus in $G$. 
	in particular, it is connected as an algebraic group.
\end{Lemma}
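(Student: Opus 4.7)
The plan is to exploit the characterization of the adjoint group $G$ in terms of its character lattice. For a semisimple group of adjoint type, the character group $X(T)$ of the maximal torus coincides with the root lattice $\mathbb{Z}\mc{R}$, and therefore the set of simple roots $\{\alpha_1, \ldots, \alpha_n\}$ forms a $\Z$-basis of $X(T)$. This is the crucial structural input.

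With this in hand, the first step is to use the duality between tori and their character lattices to produce an isomorphism
\[
\varphi : T \ \xrightarrow{\scaleobj{1.2}{\sim}} \ (\mb{G}_m)^n, \qquad t \longmapsto (\alpha_1(t), \ldots, \alpha_n(t)).
\]
That $\varphi$ is an isomorphism of algebraic groups is precisely the statement that $\{\alpha_1, \ldots, \alpha_n\}$ is a basis of $X(T)$ (upon identifying $X((\mb{G}_m)^n)$ with $\Z^n$ via the standard coordinate characters).

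The second step is to transport the intersection $\bigcap_{j=1}^r \ker \alpha_{i_j}$ across $\varphi$. Reindexing the simple roots so that the distinguished ones are $\alpha_{i_1}, \ldots, \alpha_{i_r}$, this intersection maps isomorphically onto the closed subgroup of $(\mb{G}_m)^n$ cut out by setting the coordinates indexed by $i_1, \ldots, i_r$ equal to $1$. This subgroup is visibly isomorphic to $(\mb{G}_m)^{n-r}$, hence a torus, and in particular connected.

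No serious obstacle is expected: the whole argument is essentially the observation that for an adjoint group, the simple roots identify $T$ with a coordinate torus. The only subtlety worth flagging explicitly is why the adjoint hypothesis is used — if $G$ were not adjoint, the simple roots would only span a sublattice of finite index in $X(T)$, and the kernel intersection could pick up disconnected components coming from the finite quotient; this is what will be illustrated by counterexamples in the discussion to follow.
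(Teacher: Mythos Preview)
Your proof is correct and follows essentially the same approach as the paper: both rest on the fact that for an adjoint semisimple group the simple roots form a $\Z$-basis of $X(T)$. The only cosmetic difference is that the paper projects $T$ onto $\mathbb{G}_m^r$ via the $r$ chosen roots and analyzes the kernel through the resulting short exact sequence of character lattices, whereas you use all $n$ simple roots at once to identify $T\cong(\mathbb{G}_m)^n$ and read off the kernel as a coordinate subtorus.
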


\begin{proof}
	First note that if $\{\alpha_{i_{1}},\ldots, \alpha_{i_{r}}\}=\mc{S}$, then $\bigcap_{j=1}^{r}\ker \alpha_{i_{j}}=Z(G)$. 
	Since $G$ is of adjoint type, the center of $G$, denoted $Z(G)$, is the trivial group. 
	Hence, the intersection $\bigcap_{j=1}^{r}\ker \alpha_{i_{j}}$ is connected.
	
	Next, we assume that $\{\alpha_{i_{1}},\ldots, \alpha_{i_{r}}\}$ is a proper subset of $\mc{S}$. 
	We consider the following algebraic group homomorphism:
	\begin{align*}
		\pi: T &\longrightarrow \mathbb{G}_{m}^{r} \\
		t & \longmapsto (\alpha_{i_{1}}(t),\ldots, \alpha_{i_{r}}(t)).
	\end{align*}
	Evidently, $\pi$ is surjective, and the coordinate functions on $\mathbb{G}_m^r$ are given by the characters $\alpha_{i_1},\dots, \alpha_{i_r}$. 
	In particular, the character group of $\mathbb{G}_m^r$ can be identified with the subgroup of the character group of $T$. 
	Note that the kernel of $\pi$ is given by $\ker \pi= \bigcap_{j=1}^{r}\ker \alpha_{i_{j}}$.
	These groups fit into a short exact sequence of commutative algebraic group homomorphisms:
	$$
	1 \to \ker \pi \to T \to (\mathbb{G}_{m})^{r} \to 1.
	$$
	We pass to the short exact sequence of character groups:
	$$
	0\to X(\mathbb{G}_m^r)\to X(T)\to X(\ker \pi)\to 0.
	$$
	Note also that, since $G$ is of adjoint type, we know that $X(T)$ is spanned by the set of all simple roots $\alpha_1,\dots, \alpha_n$. 
	Hence, since $X(\mathbb{G}_{m}^{r})$ is freely generated by the characters $\alpha_{i_1},\dots, \alpha_{i_r}$,
	we see that $X(\ker \pi)$ is freely generated by the simple roots $\mc{S} \setminus \{\alpha_{i_1},\ldots, \alpha_{i_{r}}\}$. 
	Therefore, by duality, $\ker \pi$ is a subtorus of $T$.
\end{proof}

We will present examples to show that the assumption of adjointness on the semisimple group $G$ is indeed important for the conclusion of our lemmas. 

\begin{Example}
	
	We consider $SL(2,\C)$ as our semisimple algebraic group, $G$. 
	Let $B$ denote the Borel subgroup consisting of upper triangular matrices. 
	Let $T$ denote the diagonal torus in $B$. 
	The unique simple root determined by this data is denoted by $\alpha_1$. 
	Then the kernel of $\alpha_1$ is given by 
	$$
	\ker \alpha_1=\{{\rm diag}(t, t): t^2=1\}.
	$$
	Evidently, this is abelian group is not connected. 
\end{Example}

Next, we give an example to explain what happens for the reduced expressions as in Lemma~\ref{lemma A}.

\begin{Example}
	Let $G:=SL(3,\C)$.
	Let $B$ denote the Borel subgroup consisting of upper triangular matrices. 
	Let $T$ denote the diagonal torus in $B$. 
	The simple roots determined by this data are given by $\alpha_1$ and $\alpha_2$. 
	In particular, the product $s_{1}s_{2}$ is a reduced expression. 
	Let us compute the roots $\beta_1$ and $\beta_2$ as in Lemma~\ref{lemma A}:
	\begin{align*}
		\beta_{1}=\alpha_{1}\qquad \text{ and }\qquad \beta_{2}=s_1(\alpha_2) = \alpha_{1}+\alpha_{2}.
	\end{align*}
	Then the intersection of the kernels of $\beta_1$ and $\beta_2$ is given by the following diagonalizable group: 
	$$
	\ker \beta_1\cap \ker \beta_{2}=\{{\rm diag}(t, t, t): t^{3}= 1\}.
	$$
	Evidently, this is not a connected group.
\end{Example}

Finally, we are ready to state and prove a connectedness result that we promised earlier.

\begin{Proposition}
	Let $G$ be a complex semisimple algebraic group of adjoint type. 
	Let $w\in W$ and $J\subseteq \mc{I}_{w}$.
	If $X_{wB}$ is an $L_{J}$-horospherical variety, then the stabilizer of a point in a general position in $X_{wB}$ is a connected strongly solvable group. 
\end{Proposition}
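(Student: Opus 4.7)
The plan is to identify the stabilizer $H$ of a point in general position explicitly as $T_c \cdot U_{L_J}$, where $T_c$ is a subtorus of $T$ and $U_{L_J}$ is the maximal unipotent subgroup of $L_J$, and then appeal to Lemma~\ref{lemma B} for the connectedness of $T_c$, which is where the adjoint hypothesis enters. Strong solvability is already given by Theorem~\ref{intro:T2}, so the genuinely new content is the connectedness.

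By Theorem~\ref{intro:T1}, the horospherical hypothesis forces $w = w_{0,J} c$ with additive length and disjoint supports. Writing $c = s_{i_1} \cdots s_{i_r}$ and $\beta_{i_j} := s_{i_1} \cdots s_{i_{j-1}}(\alpha_{i_j})$, I take the concrete general-position point $\xi := u_{\beta_{i_1}}(1) \cdots u_{\beta_{i_r}}(1) \dot{c} B/B$ from the proof of Proposition~\ref{P:horo1}, and set $H := {\rm stab}_{L_J}(\xi)$. The proof of Proposition~\ref{P:horo1} already exhibits every $U_\alpha$ with $\alpha \in J$ inside $H$, and these generate the maximal unipotent $U_{L_J}$, so $U_{L_J} \subseteq H$. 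By Theorem~\ref{intro:T2}, $H$ sits inside some Borel $B'$ of $L_J$; its unipotent radical $R_u(B')$ is the locus of unipotent elements of $B'$, hence contains $U_{L_J}$, and since both are maximal unipotent subgroups of $L_J$ they must coincide. Because a Borel is the normalizer of its unipotent radical, $B' = N_{L_J}(U_{L_J}) = B \cap L_J$. The semidirect decomposition $B \cap L_J = T \ltimes U_{L_J}$, combined with $U_{L_J} \subseteq H$, then forces $H = (H \cap T) \cdot U_{L_J}$, and a direct calculation of the $T$-action on the cell $U \dot{c} B/B$ identifies $H \cap T$ with $T_c := \bigcap_{j=1}^r \ker \beta_{i_j}$ via the uniqueness of coordinates in the Bruhat cell.

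To conclude, Lemma~\ref{lemma A} rewrites $T_c$ as $\bigcap_{\alpha \in {\rm supp}(c)} \ker \alpha$, and Lemma~\ref{lemma B} then guarantees, thanks to the adjoint hypothesis on $G$, that this intersection is a subtorus of $T$; in particular it is connected. Hence $H = T_c \cdot U_{L_J}$ is connected, and the result transfers to the stabilizer of any other point in general position by conjugation. The main obstacle I expect is pinning down the specific Borel $B' = B \cap L_J$ containing $H$: this uses in an essential way that $H$ contains a \emph{full} maximal unipotent of $L_J$, a feature peculiar to horospherical varieties, together with the self-normalizing property of Borels. Once this identification is secured, the connectedness reduces to Lemma~\ref{lemma B}, where the adjoint hypothesis is indispensable, since the examples immediately preceding this proposition demonstrate that $T_c$ can be disconnected for simply connected $G$.
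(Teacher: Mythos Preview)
Your proof is correct and follows the same overall architecture as the paper's: work with the explicit point $\xi$ in the open $T$-orbit of $X_{cB}$, identify $H := {\rm stab}_{L_J}(\xi)$ as $T_c \cdot U_{L_J}$, and invoke Lemmas~\ref{lemma A} and~\ref{lemma B} (the latter requiring the adjoint hypothesis) for the connectedness of $T_c$. The one substantive difference is in how you obtain the containment $H \subseteq B_J = B\cap L_J$. The paper argues via the Bruhat decomposition of $L_J$: the additive-length condition $\ell(w_{0,J}c) = \ell(w_{0,J}) + \ell(c)$ yields $B_J \dot{\tau} B \dot{c} B \subseteq B \dot{\tau}\dot{c} B$ for each $\tau \neq id$ in $W_J$, so any element of the cell $B_J \dot{\tau} B_J$ moves $\xi$ to a different Schubert cell and hence $H \cap B_J\dot{\tau}B_J = \emptyset$. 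You instead invoke Theorem~\ref{intro:T2} to place $H$ inside \emph{some} Borel $B'$ of $L_J$, and then pin down $B' = B_J$ via the self-normalizing property of Borels once the inclusion $U_{L_J} \subseteq H \subseteq B'$ forces $R_u(B') = U_{L_J}$. Your route is more structural and recycles the strong-solvability theorem; the paper's is self-contained but leans on the explicit cell calculus. Your write-up is also more explicit than the paper's about the step from $H \subseteq B_J$ to $H = (H \cap T)\cdot U_{L_J}$ via the semidirect decomposition, and about the identification $H \cap T = T_c$ from the uniqueness of Bruhat-cell coordinates.
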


\begin{proof}
	
	The strong solvability part of the stabilizer subgroup have already been proven. 
	We proceed to prove the connectedness. 
	The initial part of the proof is similar to the proof of Proposition~\ref{P:horo2}.
	
	Since $X_{wB}$ is $L_{J}$-horospherical variety, by our earlier theorem we have $w=w_{0,J}c$ such that $\ell(w)=\ell(w_{0,J})+\ell(c)$ and
	${\rm supp}(c)\cap {\rm supp}(w_{0,J})=\emptyset$. 
	We fix a reduced expression $c=s_{i_{1}}s_{i_{i_{2}}}\cdots s_{i_{r}}$. 
	For $j\in \{1,\dots, r\}$, let $\beta_j$ denote the root $\beta_{j} :=s_{i_1}s_{i_2}\cdots s_{i_{j-1}}(\alpha_{i_{j}})$. 
	Then we have 
	\[
	U\dot{c}B/B=U_{\beta_{i_{1}}}U_{\beta_{i_{2}}}\cdots U_{\beta_{i_{r}}}\dot{c}B/B.
	\]
	Let $\xi$ denote the point $\xi=u_{\beta_{i_{1}}}(1)\cdots u_{\beta_{i_{r}}}(1)\dot{c}B/B$.  
	Then the dimension of the orbit $L_{J}\cdot \xi$ is equal to $\dim X_{wB}$.
	Let $H={\rm stab}_{L_{J}}(\xi)$. 
	Let $T_c$ denote the intersection defined by $T_{c}:=\bigcap\limits_{j=1}^{r} \ker \beta_{i_{j}}$.
	Then we have $T_c \subset H$. 
	Since $X_{wB}$ is $L_{J}$-horospherical, we have $\big \langle U_{\alpha}: \alpha \in J\big\rangle\subset H$.
	Moreover since $\ell(w)=\ell(w_{0,J})+\ell(c)$ it follows $B_{J}\dot{\tau}B\dot{c}B\subset B\dot{\tau}\dot{c}B$ for all $\tau \neq id$ in $W_{J}$. Therefore it follows that $H\cap B_{J}\dot{\tau}B_{J}=\emptyset$ for all $\tau \neq id$ in $W_{J}$. 
	Consequently $H$ is generated by $T_c$ and the root subgroups indexed by the elements of $J$: 
	\[
	H=\big\langle T_c~, U_{\alpha} \mid \alpha \in J \big \rangle.
	\]	
	It follows from Lemma \ref{lemma A} that $T_c$ is a torus. 
	In fact, $T_c$ is a maximal torus of $H$.
	Hence, $H$ is connected strongly solvable subgroup of $L_{J}$. 
	This finishes the proof of our assertion.
\end{proof}

\section{Doubly-spherical Varieties}\label{S:Doubly}

A spherical $G$-variety $X$ is said to be {\em nearly toric} if the minimum codimension of a $T$-orbit in $X$ is 1. 
These varieties were introduced in~\cite{CanDiaz2024}, where their examples among Schubert varieties were determined. 
It was shown in~\cite{CanDiaz2024, CanSaha2023} that if a singular Schubert variety $X_{wB} \subset GL(n,\C)/B$ such that 
the minimum codimension of a $T$-orbit in $X$ is 1, is a nearly toric Schubert variety. 
In this section, we present, among other things, a classification of nearly toric Schubert varieties across all types.

\begin{Lemma}\label{L:nearlytoric}
Let $w\in W$ and $J\subseteq \mc{I}_{w}$. 
Let $X_{wB} \subseteq G/B$ be a nearly toric $L_J$-variety such that $\dim B_J = \dim X_{wB}$. 
Then $w$ is a product of the form $s_\alpha c$, where $s_\alpha$ is a simple reflection and $c$ is a Coxeter element such that $\ell(w) = \ell(c)+1$. 
In particular, we have $J = \{ \alpha \}$. 
\end{Lemma}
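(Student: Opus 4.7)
The plan is to combine three ingredients: the spherical classification of Schubert varieties used throughout this paper, a dimension count that upgrades the Coxeter-type factor of $w$ to a full Coxeter element, and the bound on $T$-orbit dimensions coming from the nearly-toric hypothesis.

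First, I would invoke the characterization of spherical Schubert varieties from~\cite{CanSaha2023, GaoHodgesYong}: since a nearly toric $L_{J}$-variety is in particular $L_{J}$-spherical, we may write
$$
w = w_{0,J}\, c, \qquad \ell(w) = \ell(w_{0,J}) + \ell(c),
$$
with $c$ a Coxeter-type element of $W$. Let $n$ denote the rank of $G$, so that $n = \dim T = |\mc{S}|$. Since $B_{J} = B \cap L_{J}$ is a Borel subgroup of $L_{J}$ whose unipotent radical has $\ell(w_{0,J})$ positive roots, we have $\dim B_{J} = n + \ell(w_{0,J})$. The hypothesis $\dim B_{J} = \dim X_{wB} = \ell(w)$ therefore yields $\ell(c) = n$. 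A Coxeter-type element is a product of distinct simple reflections whose length equals the number of factors, so $c$ must involve all $n$ simple reflections; that is, $c$ is a Coxeter element of $W$.

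Next, I would extract from the nearly-toric hypothesis a bound on $\ell(w)$. The minimum codimension of a $T$-orbit in $X_{wB}$ being $1$ means the maximum $T$-orbit dimension equals $\dim X_{wB} - 1 = \ell(w) - 1$. Since every $T$-orbit has dimension at most $\dim T = n$, we obtain $\ell(w) \leq n+1$. Combining this with $\ell(w) = \ell(w_{0,J}) + n$ gives $\ell(w_{0,J}) \leq 1$.

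Finally, I would rule out the degenerate case $\ell(w_{0,J}) = 0$: in that case $J = \emptyset$ and $L_{J} = T$, but a spherical $T$-variety has an open $T$-orbit, so the minimum codimension of a $T$-orbit would be $0$, contradicting the nearly-toric hypothesis. Therefore $\ell(w_{0,J}) = 1$, so $J = \{\alpha\}$ for a single simple root $\alpha$ and $w_{0,J} = s_\alpha$. We conclude that $w = s_\alpha c$ with $c$ a Coxeter element and $\ell(w) = \ell(c) + 1$, as required. Each step is essentially routine once set up; the main conceptual move is recognizing that the dimension hypothesis $\dim B_{J} = \dim X_{wB}$ is precisely what promotes the Coxeter-type element $c$ to a full Coxeter element of $W$, and that the bound $\ell(w) \leq n+1$ is the correct way to translate the nearly-toric condition into a length inequality.
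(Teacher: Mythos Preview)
Your proof is correct and follows essentially the same approach as the paper's: both use the spherical decomposition $w = w_{0,J}c$, the equality $\dim B_J = \dim X_{wB}$, and the nearly-toric bound on $T$-orbit dimensions to force $\ell(w_{0,J}) \leq 1$, then rule out $J=\emptyset$ via the same toric contradiction. The only difference is the order of operations: you first deduce $\ell(c)=n$ (making explicit that $c$ is a full Coxeter element) and then bound $\ell(w_{0,J})$, whereas the paper bounds $\dim U_J \leq 1$ in one combined step and relegates the structure of $w$ to a citation of \cite[Theorem~6.1]{CanSaha2023}.
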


\begin{proof}
Then we have 
$$
\dim X_{wB} = \dim T + \dim U_J,
$$
where $U_J$ is the maximal unipotent subgroup of $B_J$. 
We assume that $X_{wB}$ is nearly toric. 

There is a point $x\in X_{wB}$ such that 
$$
\dim T\cdot x = \dim X_{wB}-1.
$$
It follows from the equality 
$$
 \dim T + \dim U_J = \dim T \cdot x +1
$$
that $|J| \leq 1$. 
We claim that $|J| = 0$ is not possible. 
Indeed, if we assume otherwise that $|J|=0$, then $w_{0,J} =id$, implying that $w$ is a Coxeter element.
Then $X_{wB}$ is a toric variety by the main result of~\cite{Karuppuchamy}.
Since we assumed that 
the minimum codimension of a $T$-orbit in $X_{wB}$ is 1, this is absurd. 
Hence, we know that $J = \{\alpha \}$ for some simple reflection $\alpha \in \mc{I}_{w}$. 
This proves our second assertion. 
In light of~\cite[Theorem 6.1]{CanSaha2023}, our first assertion follows from the second one.
\end{proof}

We are now ready to prove the third main result of our article. 
Let us recall its statement for convenience. 
\medskip

Let $w\in W$. 
Then $X_{wB}$ is a nearly toric $L_{\mc{I}_w}$-variety if and only if $w$ is of the form $w= s_\alpha c$, where $s_\alpha \in {\rm supp}(c)$ and $\ell(w) = \ell(c)+1$.

\begin{proof}[Proof of Theorem~\ref{intro:T3}]

($\Rightarrow$) 
Let $X_{wB}$ be a nearly toric Schubert variety.
Then $X_{wB}$ is a spherical $L_J$-variety, where $L_J$ is a Levi subgroup for some $J\subseteq \mc{I}_w$.
There is no harm in assuming that $\dim X_{wB} = \dim B_{L_J}$, where $B_{L_J}$ is the Borel subgroup of $L_J$. 
Then our assertion readily follows from Lemma~\ref{L:nearlytoric}.

($\Leftarrow$) 
If $w$ is of the form $w= s_\alpha c$, where $s_\alpha \in {\rm supp}(c)$ and $\ell(w) = \ell(c)+1$, then~\cite[Theorem 6.2]{CanSaha2023} implies that $X_{wB}$ is a spherical $L_J$-variety, where $J= \{\alpha\}$. 
Since $s_\alpha \in {\rm supp}(c)$, we know that $w$ is not a Coxeter element, hence, $X_{wB}$ is not a toric variety.
At the same time, the dimension of the Borel subgroup of $L_J$ is given by 
$$
\dim B_{L_{J}} = \dim T + \dim U_\alpha  = \dim T +1.
$$
Hence, $T$ has a one codimensional orbit in $X_{wB}$. 
It follows that the minimum codimension of a $T$-orbit in $X_{wB}$ is 1.
Hence, $X_{wB}$ is a nearly toric variety. 
This finishes the proof of our theorem.
\end{proof}

The proof of our previous theorem gives more. 
It shows that there is possibly a smaller Levi subgroup of the stabilizer group in $G$ of a nearly toric Schubert variety $X_{wB}$. 
Indeed, we have the following direct consequence of the (proof of the) Theorem~\ref{intro:T3}.

\begin{Corollary}\label{C:alphaissufficient}
Let $X_{wB}$ be a nearly toric Schubert variety.
If $w$ is given by $w= s_{\alpha} c$, where $\alpha$ is a simple root and $\ell(w) = \ell(c)+1$, then $X_{wB}$ is a nearly toric $L$-variety,
where $L$ is the Levi subgroup of $G$ generated by $T$ and the root subgroups $U_{\pm \alpha}$.   
\end{Corollary}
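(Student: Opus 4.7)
The plan is to observe that the $(\Leftarrow)$ direction of the proof of Theorem~\ref{intro:T3} already establishes the sharper statement of the corollary, because the Levi appearing in that dimension count is the single-root Levi $L_{\{\alpha\}}$, not the full stabilizer Levi $L_{\mathcal{I}_w}$. Accordingly, the corollary amounts to a careful rereading of that proof with this observation made explicit.

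First, I would note that the relation $\ell(w) = \ell(c)+1$ with $w = s_\alpha c$ says that $s_\alpha$ is a left descent of $w$, whence $\alpha \in \mathcal{I}_w$; this ensures that $L = L_{\{\alpha\}}$ is contained in the stabilizer of $X_{wB}$ in $G$ and therefore acts on $X_{wB}$. Because $X_{wB}$ is assumed to be nearly toric, Theorem~\ref{intro:T3} guarantees that $c$ may be taken to be a Coxeter-type element with $s_\alpha \in {\rm supp}(c)$. Invoking~\cite[Theorem 6.2]{CanSaha2023} with $J = \{\alpha\}$, I then conclude that $X_{wB}$ is a spherical $L$-variety.

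Second, I would rerun the dimension count from the proof of Theorem~\ref{intro:T3}. The Borel subgroup $B_L = T \cdot U_\alpha$ of $L$ has dimension
$$\dim B_L = \dim T + \dim U_\alpha = \dim T + 1,$$
and since $B_L$ has an open orbit in the $L$-spherical variety $X_{wB}$, we obtain $\dim X_{wB} \leq \dim T + 1$. Because $s_\alpha \in {\rm supp}(c)$, the element $w$ is not a Coxeter element, so by~\cite{Karuppuchamy} the Schubert variety $X_{wB}$ fails to be toric; in particular, $T$ cannot have an open orbit in $X_{wB}$. Combining these two bounds forces the generic $T$-orbit in $X_{wB}$ to have codimension exactly one, which is the defining property of a nearly toric $L$-variety.

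I do not anticipate any real obstacle here, since the entire argument is a repackaging of the dimension count already present in the proof of Theorem~\ref{intro:T3}; the only conceptual point to verify is that the Levi used in that calculation was already the single-root Levi $L_{\{\alpha\}}$ rather than the full $L_{\mathcal{I}_w}$.
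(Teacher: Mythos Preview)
Your proposal is correct and matches the paper's own argument: the paper simply states that the proof is identical to the relevant direction of Theorem~\ref{intro:T3}, and you have spelled out precisely why that direction already uses the single-root Levi $L_{\{\alpha\}}$. Your added remarks (that $\alpha\in\mathcal{I}_w$ so $L$ acts, and that Theorem~\ref{intro:T3} supplies the Coxeter-type hypothesis on $c$) are welcome clarifications but do not deviate from the paper's approach.
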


\begin{proof}
The proof is identical with the `only if' part of the proof of Theorem~\ref{intro:T3}.
\end{proof}

We now turn our attention to investigating a new family of spherical varieties. 
Let us first recall the definition of this class of varieties.
Let $X$ be a spherical $G$-variety. 
If every $B$-orbit closure in $X$ is a spherical $L$-variety, where $L$ is a Levi subgroup of $G$, then we call $G$ a {\em doubly-spherical $G$-variety}.
Our goal in this section is to show that nearly toric Schubert varieties are doubly-spherical $L$-varieties, where $L\subseteq G$ is a Levi subgroup.

We are now ready to prove our last stated result from the introduction. 
We recall its statement for convenience. 
\medskip

Let $X_{wB}$ be a nearly toric Schubert variety. 
Then $X_{wB}$ is a doubly-spherical $L_{\mc{I}_w}$-variety.

\begin{proof}[Proof of Theorem~\ref{intro:T4}]
Since $X_{wB}$ is a nearly toric Schubert variety, there exists $\alpha \in \mc{I}_w$ and a Coxeter type element $c\in W$ such that $w= s_\alpha c$. 
In particular, $X_{wB}$ is a spherical $L_{\mc{I}_w}$-variety.
We fix a reduced expression of the form 
\begin{align}\label{A:areducedexp}
w = s_{\alpha} \underbrace{s_{i_1}\cdots s_{i_r}}_c.
\end{align}
We have $s_{\alpha} \in \{ s_{i_1},\dots, s_{i_r}\}$.
Otherwise, $w$ would a Coxeter-type element, hence $X_{wB}$ would be a toric $T$-variety. 
We set 
\begin{align}\label{A:Lissufficient}
L:= \langle T , U_\alpha , U_{-\alpha} \rangle. 
\end{align}
Then, by Corollary~\ref{C:alphaissufficient}, $X_{wB}$ is a nearly toric $L$-variety. 
Note that the Borel subgroup $B_L:= B\cap L$ of $L$ is contained in the Borel subgroup $B_{L_{\mc{I}_w}}:=B\cap L_{\mc{I}_w}$ of $L_{\mc{I}_w}$. 
Let $Y$ be a $B_{L_{\mc{I}_w}}$-orbit closure in $X_{wB}$. 
Then $Y$ is $B_L$-stable as well. 
We proceed with the assumption that $Y$ is not a toric $T$-variety. 
We will show that $Y$ is a spherical $L$-variety. 
To this end, since $X_{wB}$ is a spherical $L$-variety, it suffices to show that $Y$ is $L$-stable. 
But this follows readily from the fact that $Y$ is stable under $B_L$, hence, ${\rm stab}_{L_{\mc{I}_w}}(Y)$ contains $U_{\pm \alpha}$. 
Hence, the proof of our assertion is finished.
\end{proof}

\begin{Remark}
We will mention two special examples.
The flag variety of $SL(3,\C)$ is not only a doubly-spherical variety but also a nearly toric variety.
The flag variety of $SL(4,\C)$ is a doubly-spherical but not nearly toric Schubert variety. 
In particular, this example shows that the family of doubly-spherical Schubert varieties is strictly larger than the family of nearly toric Schubert varieties.
\end{Remark}

\section*{Acknowledgements}

The first author gratefully acknowledges partial support from the Louisiana Board of Regents grant (LEQSF(2023-25)-RD-A-21) and additional partial support from the Fulbright Portugal, U.S. Scholar Program. The second author acknowledges the Infosys Foundation for the partial financial support. The last author expresses gratitude to the Indian Institute of Science (IISc) for fostering a supportive and productive research environment, and acknowledges the National Board for Higher Mathematics (NBHM) Post Doctoral Fellowship with Ref. Number 0203/21(5)/2022-R\&D-II/10342.

\bibliographystyle{plain}
\bibliography{references1}

\end{document}